\documentclass{article}

\usepackage{authblk}
\title{Infinite Communication Complexity and KW Games}
\author{Evan Leach}
\affil{Department of Mathematics, UCLA}
\date{August 31, 2026}

\usepackage[letterpaper, total={5.5in, 8in}]{geometry}
\usepackage{setspace}
\usepackage{amsmath}
\usepackage{amssymb}
\usepackage{amsthm}
\usepackage{mathrsfs}
\usepackage{enumitem}
\usepackage{forest}
\usepackage{bm}
\usepackage{hyperref}
\hypersetup{
    colorlinks=true,
    linkcolor=blue,
    filecolor=blue,      
    urlcolor=blue,
    citecolor=magenta,
    }

\usepackage[
    backend=biber,
    style=alphabetic,
    maxalphanames=3,
    maxbibnames=99
]{biblatex}

\newcommand{\NN}{\mathbb{N}}

\newcommand{\AND}{\wedge}
\newcommand{\OR}{\vee}
\newcommand{\NOT}{\neg}
\newcommand{\BSigma}{\boldsymbol{\Sigma}}
\newcommand{\BPi}{\boldsymbol{\Pi}}
\newcommand{\NC}{\mathsf{NC}^1}

\newcommand{\Pclass}{\mathsf{P}}
\newcommand{\NP}{\mathsf{NP}}
\newcommand{\coNP}{\mathsf{coNP}}
\newcommand{\poly}{\mathsf{poly}}

\newcommand{\EQ}{\mathrm{EQ}}
\newcommand{\DISJ}{\mathrm{DISJ}}
\newcommand{\KW}{\mathrm{KW}}
\newcommand{\mKW}{\mathrm{mKW}}
\newcommand{\BCG}{\mathrm{BCG}}
\newcommand{\mBCG}{\mathrm{mBCG}}
\DeclareMathOperator{\rank}{rank}

\theoremstyle{plain}
\newtheorem{theorem}{Theorem}[section]

\newtheorem{lemma}[theorem]{Lemma}
\newtheorem{corollary}[theorem]{Corollary}

\theoremstyle{definition}
\newtheorem{definition}[theorem]{Definition}

\newtheorem{remark}[theorem]{Remark}

\begin{document}

\null
\nointerlineskip
\vfill
\let\snewpage \newpage
\let\newpage \relax
\maketitle

\vspace{2cm}

\begin{abstract}

We characterize the Borel sets with an infinite version of Karchmer and Wigderson's game linking finite circuit complexity to communication complexity. To this end, we formulate an infinite version of communication complexity and prove that a given subset of the Cantor space is Borel if and only if a certain infinite communication game is solvable. We utilize this connection to provide new elementary and purely combinatorial proofs of some classical results in descriptive set theory, including the analytic separation theorem and the equivalence of monotone and positive Borel sets. Another consequence is a characterization of Borel separability via the winner of a certain ``cut-and-choose'' game, which we use to obtain new combinatorial proofs that neither the set of ill-founded trees nor any infinite parity function is Borel.

\end{abstract}

\vspace{2cm}

\let \newpage \snewpage
\vfill

\newpage

\tableofcontents

\newpage

\section{Introduction}

Karchmer--Wigderson (KW) games \cite{kw1990} characterize the minimum circuit depth needed to compute a boolean function $f \colon \{0,1\}^n \rightarrow \{0,1\}$ in terms of a communication game. This connection allows us to prove elusive circuit lower bounds by transferring the problem to the field of communication complexity. KW games have been used to prove several state-of-the-art lower bounds, such as the monotone circuit depth of perfect matching \cite{raz1992}. They have also seen modern successes in circuit complexity \cite{multiparty2022} and remain a promising technique for resolving major open problems such as the $\NC$ vs.\ $\Pclass$ question \cite{krw1995, krw2020}.

In this paper, we formulate an infinite version of communication complexity and generalize KW games to the infinite setting. In particular, we extend Karchmer and Wigderson's results to characterize sets computable by infinite circuits in terms of infinite KW games.

Infinite circuits have been studied by Sipser in \cite{sipser1983} and \cite{sipser1984}. These circuits are countable, well-founded trees with nodes labeled as $\AND$-gates, $\OR$-gates, literals $x_i$, and negated literals $\overline{x}_i$. Infinite circuits compute maps which take a string $x \in 2^\NN$ as input and output the evaluation of the root node to $0$ or $1$. The functions $f \colon 2^\NN \rightarrow \{0,1\}$ computed by infinite circuits are precisely the characteristic functions of Borel sets $B \subseteq 2^\NN$ (see Section~\ref{sec:correspondence} for a proof). We can also characterize analytic sets in terms of nondeterministic circuits.

\begin{figure}[h]
    \centering
    \begin{forest}
      for tree={
        math content,
        s sep=3mm, 
        l sep=12mm, 
        tier/.option=level,
        edge={thick}
      },
      gate/.style={circle, draw, thick, fill=gray!20, inner sep=0pt, minimum size=6mm},
      ellipsis child/.style={
        edge path={
          \noexpand\draw[\forestoption{edge}] (!u) -- ($()!4.5mm!(!u)$);
          \noexpand\fill ($()!3mm!(!u)$) circle (0.7pt);
          \noexpand\fill ($()!1.5mm!(!u)$) circle (0.7pt);
          \noexpand\fill () circle (0.7pt);
        }
      }
      [\bm{\OR}, gate
        [\bm{\AND}, gate
          [x_0]
          [\overline{x}_1]
          [\overline{x}_2]
          [, ellipsis child]
          [, ellipsis child]
        ]
        [\bm{\AND}, gate
          [x_1]
          [\overline{x}_2]
          [\overline{x}_3]
          [, ellipsis child]
          [, ellipsis child]
        ]
        [\bm{\AND}, gate
          [x_2]
          [\overline{x}_3]
          [\overline{x}_4]
          [, ellipsis child]
          [, ellipsis child]
        ]
        [, coordinate, no edge]
        [, coordinate, no edge]
        [, coordinate, no edge]
        [, coordinate, no edge]
        [, coordinate, no edge]
        [, ellipsis child]
        [, coordinate, no edge]
        [, coordinate, no edge]
        [, coordinate, no edge]
        [, coordinate, no edge]
        [, ellipsis child]
      ]
    \end{forest}
    \caption{An example of an infinite circuit.}
    \label{fig:circuit}
\end{figure}

Our generalization of communication complexity to the infinite setting is new. In our model, Alice and Bob are tasked with computing a function $f \colon X \times Y \rightarrow Z$, where $X = Y = 2^\NN$ and $Z$ is countable. Alice has access to the input $x \in X$, while Bob has access to the input $y \in Y$. They can communicate with each other by sending natural numbers, and they must eventually output $f(x,y)$. We can view such a procedure as a countable decision tree, where internal nodes contain functions from $X$ or $Y$ to $\NN$ telling us which child to progress to when traversing down the tree (we interpret this output as the natural number Alice or Bob chooses to communicate). Leaf nodes are labeled by elements of $Z$ representing the procedure's output. If such a tree is well-founded, we call it a \emph{protocol}. We show that the \emph{fooling set} method from finite communication complexity generalizes seamlessly to this new framework and use it to prove that the infinite analogs of the equality and set disjointness problems do not admit communication protocols.

One communication problem of particular interest is the \emph{infinite KW game} $\KW_{A,B}$, where $A$ and $B$ are subsets of $2^\NN$. Alice is given a string $x \in A$, Bob is given a string $y \in B$, and they must output an index $i \in \NN$ such that $x_i \neq y_i$. We prove the following infinite version of Karchmer and Wigderson's theorem in \cite{kw1990}:

\begin{theorem}
\label{borel-kw-equivalence}

Two sets $A,B \subseteq 2^\NN$ are Borel separable if and only if $\KW_{A,B}$ has a communication protocol.

\end{theorem}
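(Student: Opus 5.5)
The plan is to mirror the finite Karchmer--Wigderson argument, with well-founded infinite circuits in place of bounded-depth formulas. We use the correspondence, proved in Section~\ref{sec:correspondence}, that Borel sets are exactly the sets computed by infinite circuits. We then argue by induction on well-founded trees, using rank (transfinite) induction where needed, and choose our objects with countable choice.

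\textbf{(Separable $\Rightarrow$ protocol).} Suppose $C$ is Borel with $A \subseteq C$ and $B \cap C = \emptyset$. Fix an infinite circuit $T$ computing $\chi_C$. The protocol walks down $T$ while maintaining the invariant that at the current node $v$, the subcircuit $T_v$ evaluates to $1$ on $x$ and to $0$ on $y$. This holds at the root because $x \in C$ and $y \notin C$. The moves depend on the type of $v$:
\begin{itemize}
\item At an $\OR$-gate, $T_v(x)=1$, so some child evaluates to $1$ on $x$. Alice sends the least such child index; that child is $0$ on $y$ because every child of $v$ is.
\item At an $\AND$-gate, Bob symmetrically sends the least child evaluating to $0$ on $y$.
\item At a literal $x_i$ or $\overline{x}_i$, the invariant forces $x_i \neq y_i$, so the leaf outputs $i$.
\end{itemize}
The resulting decision tree is essentially $T$ with each node relabeled by its speaker's choice function, so it is well-founded because $T$ is. We should make sure the formal definition of protocol allows the ``speaker function'' to be any function $X \to \NN$, since no measurability is demanded. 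One technicality is that the speaker functions are only defined on inputs satisfying the invariant. We extend them arbitrarily elsewhere, because correctness is only required on $A \times B$.

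\textbf{(Protocol $\Rightarrow$ separable).} Let $P$ be a protocol for $\KW_{A,B}$. Since $P$ is well-founded, we proceed by induction on $P$ and assign to each node $v$ a circuit $T_v$. The intended meaning is that the sets $A_v = \{x \in A : \text{some } y \text{ with } (x,y) \text{ reaching } v\}$ and $B_v$ are separated by the set computed by $T_v$. In the finite theory one uses the rectangle property: the inputs reaching $v$ form a set $R_v^A \times R_v^B$. That property holds here as well, because each step depends on only one player's input; we prove it by induction along paths. The construction is:
\begin{itemize}
\item At a leaf labeled $i$, every $x \in R_v^A \cap A$ and $y \in R_v^B \cap B$ satisfy $x_i \neq y_i$. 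Either $R_v^A \cap A$ or $R_v^B \cap B$ is empty, in which case we use a constant circuit (say $x_0 \AND \overline{x}_0$ or its dual). Otherwise all such $x$ share a common value $b$ of $x_i$ and all such $y$ have $1-b$. We then take the literal $x_i$ if $b=1$ and $\overline{x}_i$ if $b=0$.
\item At a node where Alice speaks with function $g$, the children partition $R_v^A$ according to the value of $g$. We take $T_v = \bigvee_n T_{v n}$, an $\OR$-gate over the countably many children. Each $x \in R_v^A \cap A$ lies in some $R_{vn}^A$ and is accepted by $T_{vn}$. Each $y \in R_v^B \cap B$ lies in every $R_{vn}^B = R_v^B$ and is rejected by all of them.
\item At a node where Bob speaks we dually take an $\AND$-gate.
\end{itemize}
The circuit built this way has the same shape as $P$, so it is well-founded. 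Its root computes a set containing $A$ and disjoint from $B$, and that set is Borel by the correspondence theorem.

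The main obstacle is bookkeeping rather than depth. We must fix a formal definition of protocol and of the rectangle sets, and check that the inductive definition along a well-founded but possibly infinitely branching tree is legitimate (well-founded recursion suffices). We must also handle the empty cases at leaves, and check that the invariant direction of the induction is right, namely that we prove separation of $R^A_v\cap A$ from $R^B_v\cap B$ for every node. Once the correspondence between Borel sets and infinite circuits is available, everything else is a direct transfinite transcription of the finite proof.
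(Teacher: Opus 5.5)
Your proposal is correct and follows the paper's proof essentially step for step. Alice speaks at $\OR$-gates and Bob at $\AND$-gates to walk down a separating circuit; conversely, Alice's and Bob's nodes become $\OR$- and $\AND$-gates and leaves become literals, and well-founded induction shows that each gate separates $R^A_v\cap A$ from $R^B_v\cap B$. Your use of constant gates at leaves where one of these sets is empty is a detail the paper glosses over but genuinely needs: a vacuously classified leaf reached by some $y\in B$ but by no $x\in A$ could otherwise make an $\OR$-gate accept that $y$. For the same reason, state the invariant for these factor sets rather than for the projection-style $A_v$ you first wrote.
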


Furthermore, the minimal rank of a Borel set separating $A$ and $B$ is closely related to the minimal ordinal rank of the underlying tree for a protocol solving the game $\KW_{A,B}$. Both directions of Theorem~\ref{borel-kw-equivalence} are useful. The reverse direction gives us several existence results, including a completely elementary and combinatorial proof of the analytic separation theorem (Theorem~\ref{analytic-separation}). We also prove that a monotone function is computed by a circuit if and only if it is computed by a monotone circuit (Theorem~\ref{monotone-borel}). The analogous claim is false for polynomial-sized circuit families \cite{tardos1988}, so this result is an interesting demonstration of differences between finite and infinite circuits.

The forward direction of Theorem~\ref{borel-kw-equivalence} seems to be a useful tool for impossibility results, since one can show that two sets $A,B \subseteq 2^\NN$ are not Borel separable by establishing hardness results for $\KW_{A,B}$. Unfortunately, there is an obstruction (Theorem~\ref{kw-partition}) telling us that most techniques from finite communication complexity cannot prove hardness results for infinite KW games.

To circumvent this obstruction, we formulate a new \emph{cut-and-choose game} $\BCG_{A,B}$, where $A$ and $B$ are subsets of $2^\NN$. This \emph{Borel cut-and-choose game} is a competitive game between Player I and Player II. Player I partitions either $A$ or $B$ into countably many pieces, Player II chooses a piece to keep, and the remaining pieces are discarded. Player I wins if and only if there exists some finite stage at which we have for some fixed $i \in \NN$ that $x_i \neq y_i$ for all $(x,y) \in A \times B$. We prove that $A$ and $B$ are Borel separable if and only if Player I has a winning strategy in $\BCG_{A,B}$. The forward direction of this result lets us show that two sets $A,B \subseteq 2^\NN$ are not Borel separable by providing a winning strategy for Player II in $\BCG_{A,B}$. This strategy can be viewed as a procedure which finds for any protocol $P$ some leaf node where the protocol must output an incorrect value. In other words, the Borel cut-and-choose game formalizes a technique for proving Borel inseparability by showing that an appropriate KW game does not admit a communication protocol.

We use this technique to prove that no infinite parity function is Borel (Corollary~\ref{parity}) and that the set of ill-founded trees is not Borel (Theorem~\ref{ill-founded-trees}). The proof of Theorem~\ref{ill-founded-trees} is quite different from the standard argument using universal sets and diagonalization, and there is a precise way in which it gives a simpler winning strategy for Player II in the appropriate Borel cut-and-choose game than one would expect.

Though the applications of KW games we discuss are not new results, the proofs are purely combinatorial and elementary. In particular, we do not rely on any external machinery from descriptive set theory. Many of the results we prove have major open problems regarding $\Pclass$, $\NP$, and $\coNP$ as finite analogs. By finding combinatorial proofs of the infinite versions, we can hopefully develop ideas helpful in eventually resolving these questions.

\subsection{Outline of the paper}

We discuss preliminaries in Section~\ref{sec:preliminaries}, covering basic definitions from descriptive set theory and providing an overview of finite communication complexity. We then discuss infinite circuits and their connection to Borel sets in Section~\ref{sec:infinite-circuits}. We show that a subset of $2^\NN$ is Borel if and only if it is computable by a circuit (Theorem~\ref{circuit-borel}) and is analytic if and only if it is computable by a depth-$2$ nondeterministic circuit (Theorem~\ref{depth-2}).

Our new contributions begin in Section~\ref{sec:infinite-cc}, where we introduce and motivate our model of infinite communication complexity. In Section~\ref{sec:kw}, we formulate our infinite analog of Karchmer--Wigderson games. We prove Theorem~\ref{borel-kw-equivalence}, as well as a variant for Borel sets computable by monotone circuits (i.e.\ the \emph{positive} Borel sets, see \cite{kechris1995}).

In Section~\ref{sec:existence}, we present proofs using KW games of the analytic separation theorem (Theorem~\ref{analytic-separation}) and the fact that a Borel set is monotone if and only if it is computable by a monotone circuit (Theorem~\ref{monotone-borel}). In Section~\ref{sec:impossibility}, we discuss impossibility results using KW games. We first describe the obstruction preventing us from using standard techniques (namely those which find lower bounds for partition numbers), and we then prove Theorem~\ref{bcg} and a monotone variant. We conclude by proving the aforementioned hardness results for parity and ill-founded trees. Finally, we discuss the difficulties of translating these arguments to the finite setting in Section~\ref{sec:finite-circuits}.

\section{Preliminaries}
\label{sec:preliminaries}

\subsection{Borel and analytic sets}

We begin with some standard definitions from descriptive set theory. See \cite{kechris1995} for a detailed treatment. Define $2^{<\NN} = \bigcup_{n \in \NN} \{0,1\}^n$, and let $w_0w_1 \cdots w_{k-1}w_k$ denote the concatenation of strings $w_0,w_1, \dots, w_{k-1} \in 2^{<\NN}$ and $w_k \in 2^{<\NN} \cup 2^\NN$. We consider the \emph{Cantor space} $2^\NN$, equipped with the topology generated by basic open sets of the form $w2^\NN$ for $w \in 2^{<\NN}$.

\begin{definition}

A set $S \subseteq 2^\NN$ is called $\BSigma^0_0$ or $\BPi^0_0$ if it is clopen. Given $1 \leq \alpha < \omega_1$, we call a set $\BSigma^0_\alpha$ (resp.\ $\BPi^0_\alpha$) if it is a countable union (resp.\ intersection) of sets $S_n$ for $n \in \NN$, each of which is in $\BPi^0_{\beta_n}$ (resp.\ $\BSigma^0_{\beta_n}$) for some $\beta_n < \alpha$.

\end{definition}

This definition agrees with the usual definition of the Borel hierarchy since the Cantor space is zero-dimensional (has a clopen basis). In particular, the open (resp.\ closed) sets of $2^\NN$ are precisely the countable unions (resp.\ intersections) of clopen sets.

\begin{definition}

A set $B \subseteq 2^\NN$ is \emph{Borel} if it is $\BSigma^0_\alpha$ for some $\alpha < \omega_1$. The \emph{rank} of a Borel set $B$ is the smallest $\alpha$ such that $B \in \BSigma^0_\alpha \cup \BPi^0_\alpha$.

\end{definition}

\begin{definition}

A set $A \subseteq 2^\NN$ is \emph{analytic} if it is the projection of a Borel set; i.e.\ if there exists a Borel set $B \subseteq 2^\NN \times 2^\NN \cong 2^\NN$ such that
\begin{equation*}
    A = \{x \in 2^\NN : (x,y) \in B \text{ for some } y \in 2^\NN\} \text{.}
\end{equation*}

\end{definition}

\subsection{Finite communication complexity}
\label{sec:finite-cc}

The field of \emph{communication complexity} studies the following problem: Suppose Alice is given a string $x \in X$ and Bob is given a string $y \in Y$, where $X$ and $Y$ are finite sets. How many bits must they communicate in order to compute a given function $f \colon X \times Y \rightarrow Z$? This section contains a very brief summary of the field, including only what is needed for our purposes. Kushilevitz and Nisan give an excellent introduction to the subject in \cite{nisan97}.

Strategies for communication problems are formalized using the concept of a \emph{protocol}, which is a finite binary tree with internal nodes belonging to either Alice or Bob. Internal nodes $v$ are labeled either with functions $a_v \colon X \rightarrow \{0,1\}$ (for Alice's nodes) or $b_v \colon Y \rightarrow \{0,1\}$ (for Bob's nodes) describing what Alice or Bob chooses to communicate at that step. These functions can be arbitrary, reflecting the fact that Alice and Bob have unbounded computational power. Leaf nodes are labeled with elements of $Z$, denoting outputs of the protocol.

A protocol $P$ computes a function $P \colon X \times Y \rightarrow Z$, where one starts at the root node and traverses down the tree, computing $a_v(x)$ or $b_v(y)$ at each internal node $v$ to determine which child to progress to (the left child if $a_v$ or $b_v$ evaluates to $0$, and the right child otherwise), and outputting the value of the leaf node eventually reached. The \emph{cost} of a protocol is the height of the tree. These protocol trees correspond precisely to strategies for Alice and Bob, with the cost of the protocol corresponding to the number of bits communicated in the worst case.

\begin{definition}

The \emph{communication complexity} of a function $f \colon X \times Y \rightarrow Z$, denoted $D(f)$, is the minimum cost of a protocol computing $f$.

\end{definition}

\subsubsection{Fooling sets and lower bounds}

The primary goal of communication complexity is to prove lower bounds on $D(f)$. Theorem~\ref{finite-partition-bound}, proved by Yao in his seminal paper on communication complexity \cite{yao1979}, is the standard technique for this purpose.

\begin{definition}

A \emph{rectangle} in $X \times Y$ is a set of the form $A \times B$, with $A \subseteq X$ and $B \subseteq Y$. We call a rectangle $R$ \emph{$f$-monochromatic} if $f$ is constant on $R$. The \emph{partition number} $\chi(f)$ of a function $f$ is the smallest number of $f$-monochromatic rectangles needed to partition $X \times Y$.

\end{definition}

\begin{theorem}
\label{finite-partition-bound}

We have $D(f) \geq \lceil \log_2 \chi(f) \rceil$.

\end{theorem}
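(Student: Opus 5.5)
The plan is the standard rectangle-partition argument: show that every protocol of cost $c$ induces a partition of $X \times Y$ into at most $2^c$ $f$-monochromatic rectangles. Then $\chi(f) \leq 2^{D(f)}$, so $D(f) \geq \log_2 \chi(f)$, and since $D(f)$ is an integer we get $D(f) \geq \lceil \log_2 \chi(f) \rceil$.

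First, fix a protocol $P$ computing $f$ with cost $c = D(f)$. For each node $v$ of the protocol tree, let $R_v \subseteq X \times Y$ be the set of inputs $(x,y)$ whose traversal from the root passes through $v$. I will show by induction on depth that each $R_v$ is a rectangle $A_v \times B_v$. At the root we have $R_{\mathrm{root}} = X \times Y$. Now suppose $v$ is an internal node with $R_v = A_v \times B_v$.
\begin{itemize}
\item If $v$ belongs to Alice, its children are $A_v \cap a_v^{-1}(0)$ times $B_v$ and $A_v \cap a_v^{-1}(1)$ times $B_v$, both rectangles, because the step taken depends only on $x$.
\item If $v$ belongs to Bob, the same holds with the roles of $x$ and $y$ swapped, using $b_v$.
\end{itemize}
The same induction shows that the two children's rectangles partition $R_v$.

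Next, since each input follows a unique root-to-leaf path, the leaf rectangles $\{R_\ell\}$ partition $X \times Y$; I discard any empty ones. On each leaf rectangle $R_\ell$, the protocol outputs the label of $\ell$. Because $P$ computes $f$, $f$ is constant on $R_\ell$, so each $R_\ell$ is $f$-monochromatic. Finally, a binary tree of height $c$ has at most $2^c$ leaves. Hence $\chi(f) \leq 2^c$.

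The only step needing any care is the rectangle property. This is the one place where the communication structure matters: each step depends on only one player's input. The induction above handles it directly, so I expect no real obstacle.
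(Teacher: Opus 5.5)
Your proof is correct and takes essentially the same approach as the paper: the leaf rectangles partition $X \times Y$ into at most $2^c$ $f$-monochromatic rectangles, so $2^{D(f)} \geq \chi(f)$. The only cosmetic difference is that you prove the rectangle property by induction on depth, whereas the paper writes each leaf rectangle directly as a product of intersections of the preimages $a_i^{-1}(\{q_i\})$ and $b_j^{-1}(\{r_j\})$ along the root-to-leaf path.
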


\begin{proof}

Suppose $f$ has a protocol $P$ of cost $c$. For each leaf node $v$ of $P$, there are functions $a_0,\dots,a_k,b_0,\dots,b_\ell$ and bits $q_0,\dots, q_k,r_0,\dots,r_\ell$ such that the protocol reaches $v$ on input $(x,y) \in X \times Y$ if and only if $a_i(x) = q_i$ for each $0 \leq i \leq k$ and $b_j(y) = r_j$ for each $0 \leq j \leq \ell$. Thus $P$ is constant on the rectangle
\begin{equation*}
    R_v = \left(\bigcap_{i=0}^k a_i^{-1}(\{q_i\})\right) \times \left(\bigcap_{j=0}^\ell b_j^{-1}(\{r_j\})\right) \subseteq X \times Y \text{.}
\end{equation*}
The rectangles $R_v$ for all leaf nodes $v$ of $P$ partition $X \times Y$, and since $P$ computes $f$, these rectangles are $f$-monochromatic. There are at most $2^c$ such rectangles, and this implies that $2^{D(f)} \geq \chi(f)$. The claim follows by taking logarithms.
\end{proof}

This theorem allows us to establish lower bounds for $D(f)$ by finding lower bounds for $\chi(f)$. One of the most basic techniques for this purpose is called the \emph{fooling set method}.

\begin{definition}

We call a set $\mathcal{F} \subseteq f^{-1}(\{1\})$ a \emph{fooling set} for $f$ if for any distinct elements $(x,y),(x',y') \in \mathcal{F}$, we have that $f(x,y')$ or $f(x',y)$ is equal to $0$.

\end{definition}

\begin{lemma}
\label{fooling-sets}

If $\mathcal{F}$ is a fooling set for $f$ and $f^{-1}(\{0\}) \neq \emptyset$, then $\chi(f) \geq |\mathcal{F}| + 1$.

\end{lemma}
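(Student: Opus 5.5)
The plan is to take an arbitrary partition of $X \times Y$ into $f$-monochromatic rectangles and show directly that it has at least $|\mathcal{F}|+1$ pieces. Write the partition as $\mathcal{R}$. Each element of $\mathcal{F}$ lies in exactly one rectangle of $\mathcal{R}$, and that rectangle must be $1$-monochromatic because $\mathcal{F} \subseteq f^{-1}(\{1\})$. The argument then has two steps: first show that distinct elements of $\mathcal{F}$ land in distinct rectangles, and then exhibit one further rectangle that contains no element of $\mathcal{F}$.

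For the first step, suppose two distinct pairs $(x,y),(x',y') \in \mathcal{F}$ lie in the same rectangle $R = S \times T$. Then $x,x' \in S$ and $y,y' \in T$, so the cross pairs $(x,y')$ and $(x',y)$ also belong to $R$. Since $R$ is monochromatic and contains $(x,y)$, where $f$ equals $1$, we get $f(x,y') = f(x',y) = 1$. This contradicts the fooling set property. Hence the map sending each element of $\mathcal{F}$ to its rectangle is injective, which gives at least $|\mathcal{F}|$ rectangles with $f \equiv 1$.

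For the second step, use $f^{-1}(\{0\}) \neq \emptyset$ to pick a point $(x_0,y_0)$ with $f(x_0,y_0) = 0$. The rectangle of $\mathcal{R}$ containing this point has $f \equiv 0$, so it differs from all the $1$-rectangles found above. This gives $|\mathcal{R}| \geq |\mathcal{F}|+1$, and taking the minimum over all partitions $\mathcal{R}$ yields $\chi(f) \geq |\mathcal{F}|+1$.

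The only substantive point is the rectangle-closure argument in the first step: a rectangle containing $(x,y)$ and $(x',y')$ must also contain both cross pairs. The rest is bookkeeping, and the argument never uses finiteness beyond the cardinality count itself.
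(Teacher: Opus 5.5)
Your proof is correct and takes the same approach as the paper. A monochromatic rectangle containing two distinct elements of $\mathcal{F}$ would also contain both cross pairs and so violate the fooling-set property, which forces $|\mathcal{F}|$ distinct $1$-rectangles plus one more rectangle covering the nonempty set $f^{-1}(\{0\})$; you simply spell out the rectangle-closure step that the paper states in one line.
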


\begin{proof}

Any $f$-monochromatic rectangle can contain at most one element of $\mathcal{F}$, so at least $|\mathcal{F}|$ $f$-monochromatic rectangles are required to cover $f^{-1}(\{1\})$. An additional rectangle is needed to cover $f^{-1}(\{0\})$ since this set is nonempty.
\end{proof}

We now demonstrate the fooling set technique by proving lower bounds on the communication complexity of two explicit functions:

\begin{definition}

We define functions $\EQ_n, \DISJ_n \colon \{0,1\}^n \times \{0,1\}^n \rightarrow \{0,1\}$ for each $n \in \NN$ as follows: We set $\EQ_n(x,y) = 1$ if $x = y$ and $\EQ_n(x,y) = 0$ otherwise. Similarly, we set $\DISJ_n(x,y) = 1$ if and only if $x_i \wedge y_i = 0$ for all $0 \leq i \leq n-1$.

\end{definition}

\begin{corollary}

We have $D(\EQ_n) = D(\DISJ_n) = n+1$.

\end{corollary}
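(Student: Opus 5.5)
We prove matching upper and lower bounds for each function. Note that the convention here is that the cost is the height of the protocol tree, and the output must be known to both parties, i.e.\ it must label a leaf. For the upper bound $n+1$, we use the trivial protocol. Alice sends her $n$ bits one at a time, which costs $n$ rounds along every branch. Bob now knows $x$ and $y$, so he sends the single bit $f(x,y)$. The leaves are then labeled by the bit Bob sent. This protocol has height $n+1$ and computes $f$ for any $f\colon\{0,1\}^n\times\{0,1\}^n\to\{0,1\}$, so in particular for $\EQ_n$ and $\DISJ_n$.

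For the lower bound we combine Theorem~\ref{finite-partition-bound} with Lemma~\ref{fooling-sets}. It suffices to exhibit a fooling set of size $2^n$ for each function, with $f^{-1}(\{0\})\neq\emptyset$. Then $\chi(f)\geq 2^n+1$, and so $D(f)\geq\lceil\log_2(2^n+1)\rceil=n+1$.

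For $\EQ_n$, take $\mathcal{F}=\{(x,x): x\in\{0,1\}^n\}$. If $x\neq x'$, then $\EQ_n(x,x')=0$, so $\mathcal{F}$ is a fooling set. Also $\EQ_n^{-1}(\{0\})$ is nonempty, since it contains any pair of distinct strings. This uses $n\geq 1$; in the degenerate case $n=0$ the function is constant and the claimed value $1$ should be read with the convention above, which we would note separately.

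For $\DISJ_n$, take $\mathcal{F}=\{(x,\bar x): x\in\{0,1\}^n\}$, where $\bar x$ is the bitwise complement of $x$. Each pair is disjoint, so $\mathcal{F}\subseteq\DISJ_n^{-1}(\{1\})$. Now suppose $x\neq x'$, and let $i$ be a coordinate where they differ. If $x_i=1$ and $x'_i=0$, then $\bar x'_i=1$, so $x_i\wedge\bar x'_i=1$ and $\DISJ_n(x,\bar x')=0$. Otherwise $x'_i=1$ and $x_i=0$, and symmetrically $\DISJ_n(x',\bar x)=0$. Hence $\mathcal{F}$ is a fooling set. Finally, $\DISJ_n(1^n,1^n)=0$, so the zero set is nonempty.

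The only real subtlety is the $+1$ in the bound. It relies on Lemma~\ref{fooling-sets} contributing the extra rectangle for the nonempty zero set, and on the cost convention counting the final output-announcing round. Both are already built into the definitions above, so no step should be a genuine obstacle.
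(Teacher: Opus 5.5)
Your proof is correct and is essentially the paper's. It uses the same protocol for the upper bound (Alice sends $x$ in $n$ bits, Bob answers with one bit), and the same fooling sets $\{(x,x)\}$ and $\{(x,\overline{x})\}$ fed into Lemma~\ref{fooling-sets} and Theorem~\ref{finite-partition-bound} for the lower bound, with the routine checks the paper leaves implicit written out. One correction to your aside: for $n=0$ both $\EQ_0$ and $\DISJ_0$ are constantly $1$, so $D=0$ and the statement simply fails there rather than being rescued by a convention; the same restriction $n\geq 1$ is also needed for your witness $\DISJ_n(1^n,1^n)=0$.
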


\begin{proof}

The upper bound follows from the fact that Alice can use $n$ bits to share $x$ with Bob, and Bob can then compute $\EQ_n(x,y)$ or $\DISJ_n(x,y)$ himself and use a single bit of communication to branch to the correct leaf node. For the lower bound, notice that $\{(x,x) : x \in \{0,1\}^n\}$ and $\{(x,\overline{x}) : x \in \{0,1\}^n\}$ are fooling sets for $\EQ_n$ and $\DISJ_n$, respectively (where $\overline{x}_i$ denotes the negation of $x_i$ and $\overline{x} = \overline{x}_0 \cdots \overline{x}_{n-1}$). Thus Lemma~\ref{fooling-sets} tells us that $\chi(\EQ_n),\chi(\DISJ_n) \geq 2^n + 1$, and Theorem~\ref{finite-partition-bound} gives us a lower bound of $\lceil \log_2(2^n + 1)\rceil = n + 1$ for $D(\EQ_n)$ and $D(\DISJ_n)$.
\end{proof}

\section{Infinite circuits}
\label{sec:infinite-circuits}

\begin{definition}

A \emph{circuit} $C$ on variables $x_0, x_1, \dots$ is a countable, labeled, well-founded tree whose nodes are called \emph{gates} and are labeled according to the following conditions:
\begin{itemize}
    \item Any node of the tree with at least $1$ child is called an \emph{internal gate} and must be labeled with either the symbol $\AND$ or $\OR$.
    \item The leaf nodes are called \emph{input gates} and must be labeled with either $0$, $1$, or the symbol $x_i$ or $\overline{x}_i$ for some $i \in \NN$ (denoting either the $i$-th bit of $x$ or the negation of this bit). We call the labels $0,1,x_i,\overline{x}_i$ \emph{literals}.
\end{itemize}

\end{definition}

\begin{definition}

Given a gate $G$ in $C$ and some input $x \in 2^\NN$, we define the \emph{evaluation} $G(x)$ inductively (which we may do by well-foundedness). An input gate evaluates to its label, an $\AND$-gate evaluates to the conjunction of the evaluations of its children, and an $\OR$-gate evaluates to the disjunction of the evaluations of its children. The root node $G_0$ of $C$ is called the \emph{output gate}, and we write $C(x) = G_0(x)$. Thus $C$ computes a function from $2^\NN$ to $\{0,1\}$, and we say that $C$ \emph{computes} the set $C^{-1}(\{1\})$. We call two circuits \emph{equivalent} if they compute the same function.

\end{definition}

\begin{remark}

Allowing for $\NOT$-gates would not increase the power of these circuits, since De Morgan's law allows us to propagate all negations to the bottom layer of the circuit. Similarly, allowing circuits to be directed graphs instead of trees would not grant any additional power, since one can duplicate any subcircuits whose root nodes are children of multiple gates.

\end{remark}

\subsection{Essential Rank}

The typical notion of ordinal rank for well-founded trees gives us a measure of complexity for circuits, but we define a slightly different measure so that Lemma~\ref{clopen} and Corollary~\ref{rank-vs-circuit} hold.

\begin{definition}
\label{def:essential-rank}

If $T$ is a well-founded tree, we define the \emph{essential rank} $\rank(v)$ of each node $v$ in $T$ inductively as follows: If the subtree of $T$ rooted at $v$ is finite, we set $\rank(v) = 0$. Otherwise, we define
\begin{equation*}
    \rank(v) = \sup \{\rank(w) + 1 : w \text{ is a child of } v\}
\end{equation*}
We define $\rank(T)$ to be the essential rank of the root node of $T$.

\end{definition}

\begin{definition}

The \emph{rank} of a circuit $C$ is the rank of its underlying tree.

\end{definition}

\subsection{Equivalence of Borel sets and circuits}
\label{sec:correspondence}

The following result and its corollaries allow us to describe Borel subsets of $2^\NN$ using circuits.

\begin{lemma}
\label{clopen}

A set $S \subseteq 2^\NN$ is clopen if and only if it is computed by a finite circuit.

\end{lemma}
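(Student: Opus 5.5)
Both directions go by unpacking the definitions; the only real input is compactness of $2^\NN$.

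For the direction ``finite circuit $\Rightarrow$ clopen'', we induct on the finite tree. The key observation is that a finite circuit $C$ mentions only finitely many variables, say among $x_0,\dots,x_{n-1}$. Each gate's evaluation $G(x)$ then depends only on $x_0,\dots,x_{n-1}$; this follows by induction from the leaves up, since a finite $\AND$ or $\OR$ of functions depending only on these coordinates again depends only on them. Hence $C^{-1}(\{1\}) = \bigcup_{w \in W} w2^\NN$ for some $W \subseteq \{0,1\}^n$. This is a finite union of basic clopen sets, and its complement is the union over $\{0,1\}^n \setminus W$, so it is clopen. Alternatively, one can induct directly: literals compute $\emptyset$, $2^\NN$, $\{x : x_i = 1\}$, or $\{x : x_i = 0\}$, all clopen, and clopen sets are closed under finite unions and intersections.

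For the converse, let $S$ be clopen. We use compactness of the Cantor space, which is the one non-syntactic step. I would prove it combinatorially via K\"onig's lemma to keep the paper elementary, or simply cite it. Since $S$ is open, it is a union of basic open sets $w2^\NN$. Since $S$ is closed in a compact space, it is compact, so finitely many cylinders $w_1 2^\NN, \dots, w_m 2^\NN$ suffice. Each cylinder $w2^\NN$ with $|w| = k$ is computed by the finite $\AND$-gate whose children are the literals $x_j$ (if $w_j = 1$) or $\overline{x}_j$ (if $w_j = 0$) for $j < k$. For $w$ empty we use the literal $1$, and for $m = 0$ we use the literal $0$. The $\OR$ of these $m$ $\AND$-gates is then a finite circuit computing $S$.

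The only step needing care is the compactness argument. If the paper wants to avoid topology entirely, I would argue via K\"onig's lemma. Consider the tree of strings $u$ such that no prefix of $u$ is among the $w$ with $w2^\NN \subseteq S$, and similarly for the complement. The union of these two trees is finitely branching. It has no infinite branch, because every $x$ lies in an open set $S$ or $2^\NN \setminus S$ and hence has a prefix whose cylinder is contained in one of them. By K\"onig's lemma this tree is finite, so there is some $n$ such that every $u \in \{0,1\}^n$ has $u2^\NN \subseteq S$ or $u2^\NN \subseteq 2^\NN \setminus S$. This directly gives the DNF over length-$n$ strings described above.
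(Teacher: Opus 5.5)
Your proposal is correct and follows essentially the same route as the paper: compactness reduces a clopen set to a finite union of cylinders $w2^\NN$, each computed by a finite $\AND$ of literals, and the converse is induction on the finite circuit, using that literals compute clopen sets and that clopen sets are closed under finite unions and intersections. One small slip in your optional K\"onig's-lemma aside: the tree you need is the \emph{intersection} of your two trees (strings none of whose prefixes has its cylinder inside $S$ or inside $2^\NN \setminus S$), since the union contains every prefix of any $x \in S$ and so has an infinite branch whenever $S \neq \emptyset$.
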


\begin{proof}

Any open set $S \subseteq 2^\NN$ can be written as a union of basic open sets, and if $S$ is also closed, this union can be taken to be finite by compactness. Each basic open set in such a union is computed by a finite circuit, so the forward direction follows by taking a finite disjunction of such circuits.

Conversely, any circuit consisting of a single input gate computes either $\emptyset$, $2^\NN$, or a set of the form $\{x \in 2^\NN : x_i = b\}$ for some $i \in \NN$ and $b \in \{0,1\}$. Note that these sets are all clopen. As the clopen sets are closed under finite unions and intersections, this means that any set $S$ computed by a finite circuit is also clopen.
\end{proof}

This lemma tells us that a subset of $2^\NN$ is clopen if and only if it is computed by a rank-$0$ circuit. We obtain the following corollary from induction:

\begin{corollary}
\label{rank-vs-circuit}

For any $\alpha < \omega_1$, a set $S \subseteq 2^\NN$ lies in $\BSigma^0_\alpha \cup \BPi^0_\alpha$ if and only if it is computed by a rank-$\alpha$ circuit.

\end{corollary}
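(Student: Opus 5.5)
We argue by transfinite induction on $\alpha < \omega_1$, proving both directions simultaneously. The base case $\alpha = 0$ is Lemma~\ref{clopen}: $S \in \BSigma^0_0 \cup \BPi^0_0$ means $S$ is clopen, and a rank-$0$ circuit is exactly a finite circuit, since by Definition~\ref{def:essential-rank} the root has essential rank $0$ if and only if the whole tree is finite. A preliminary observation used throughout is that the class of sets computed by circuits of rank $\alpha$ is closed under complement. Indeed, swapping $\AND$ with $\OR$, $x_i$ with $\overline{x}_i$, and $0$ with $1$ preserves the underlying tree, hence the rank, and computes the complement by De Morgan. Thus it suffices to handle $\BSigma^0_\alpha$ sets in the forward direction and $\OR$-rooted circuits in the reverse direction.

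For the forward direction, take $\alpha \geq 1$ and $S = \bigcup_n S_n$ with $S_n \in \BPi^0_{\beta_n}$ and $\beta_n < \alpha$. By induction each $S_n$ is computed by a circuit $C_n$ of rank $\beta_n$. Form the circuit with an $\OR$ root whose children are the roots of the $C_n$. If infinitely many $S_n$ are present (we may always pad with copies of $\emptyset$, or repeat a set), the tree is infinite, so its rank is $\sup_n(\beta_n+1) \leq \alpha$. This gives rank at most $\alpha$, not exactly $\alpha$. Two fixes are needed.
\begin{enumerate}[label=(\roman*)]
\item We should really prove the statement with ``rank $\leq \alpha$''. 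This is harmless, since the classes are increasing: $\BSigma^0_\beta \cup \BPi^0_\beta \subseteq \BSigma^0_\alpha \cap \BPi^0_\alpha$ for $\beta<\alpha$ by trivial one-element unions and intersections.
\item To get exactly rank $\alpha$, we pad. Add to the $\OR$ gate extra children, namely circuits computing $\emptyset$ of every rank $\gamma<\alpha$. These do not change the computed set, and they raise the supremum to exactly $\alpha$.
\end{enumerate}
A circuit computing $\emptyset$ of rank $\gamma$ is built by induction on $\gamma$. For $\gamma = 0$ take the input gate $0$. Otherwise take an $\OR$ gate whose children are $\emptyset$-circuits of each rank $\delta<\gamma$, each repeated infinitely often; this is countable since $\gamma<\omega_1$, infinite, and has rank $\sup(\delta+1)=\gamma$. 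So the cleanest formulation is to prove ``rank exactly $\alpha$'' via this padding, with ``rank $\leq\alpha$'' as the intermediate statement.

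For the reverse direction, let $C$ have rank $\alpha \geq 1$. Then the tree is infinite, so the root is an internal gate, say $\OR$ after complementing if necessary. Its children $w$ have $\rank(w) + 1 \leq \alpha$, so $\rank(w) < \alpha$. By induction each child's subcircuit computes a set in $\BSigma^0_{\rank(w)} \cup \BPi^0_{\rank(w)}$. Any such set lies in $\BPi^0_{\rank(w)+1}$, as a one-term intersection, though we must take care when $\rank(w)+1 = \alpha$. Better: a $\BSigma^0_\beta$ or $\BPi^0_\beta$ set with $\beta<\alpha$ is directly an allowed term in a union at level $\alpha$ after rewriting. If it is $\BPi^0_\beta$, it is allowed as is. If it is $\BSigma^0_\beta$, it is itself a countable union of $\BPi^0_{<\beta}$ sets, and we splice these pieces into the outer union. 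Either way $C$ computes a countable union of sets in $\BPi^0_{\beta_n}$ with $\beta_n<\alpha$, which is $\BSigma^0_\alpha$. The root may have finitely many children; this is fine, since finite unions are countable unions.

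The main obstacle is the bookkeeping around essential rank rather than any deep step. In the reverse direction, we must check that the splicing of $\BSigma$ children is legitimate in the level-$1$ case, where children of rank $0$ are clopen and hence $\BPi^0_0$. In the forward direction, we must hit the rank exactly, which is where the padding construction is needed.
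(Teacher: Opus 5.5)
Your proof is correct and follows the paper's route, which is simply transfinite induction with Lemma~\ref{clopen} as the base case. You also supply details the paper leaves implicit: the padding with $\emptyset$-circuits of every rank $\gamma<\alpha$ to hit rank exactly $\alpha$, the De Morgan duality preserving the tree, and splicing $\BSigma^0_\beta$ children into the outer union. The one step you use without stating it is that $\BPi^0_\alpha$ is exactly the class of complements of $\BSigma^0_\alpha$ sets, which follows from the definition by a routine induction.
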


\begin{corollary}
\label{circuit-borel}

A subset of $2^\NN$ is Borel if and only if it is computed by a circuit.

\end{corollary}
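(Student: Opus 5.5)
The statement follows from Corollary~\ref{rank-vs-circuit} together with the definition of Borel sets. I will prove the two directions separately and check that the rank bookkeeping causes no trouble.

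For the forward direction, suppose $S$ is Borel. By definition, $S \in \BSigma^0_\alpha$ for some $\alpha < \omega_1$, so in particular $S \in \BSigma^0_\alpha \cup \BPi^0_\alpha$. Corollary~\ref{rank-vs-circuit} then gives a rank-$\alpha$ circuit computing $S$.

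For the reverse direction, suppose $C$ is a circuit computing $S$. Its underlying tree is countable and well-founded, so the essential rank $\rank(C)$ is well defined. I will check that this rank is a countable ordinal. By induction on nodes, the essential rank of each node is a supremum over countably many children of countable ordinals plus one. Since $\omega_1$ is regular (a countable supremum of countable ordinals is countable), the rank stays below $\omega_1$. Write $\alpha = \rank(C) < \omega_1$. Corollary~\ref{rank-vs-circuit} then places $S$ in $\BSigma^0_\alpha \cup \BPi^0_\alpha$.

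It remains to see that this makes $S$ Borel. If $S \in \BSigma^0_\alpha$, this is immediate. If $S \in \BPi^0_\alpha$, then for $\alpha \geq 1$ I view $S$ as the countable union $\bigcup_n S_n$ with every $S_n = S$. Each $S_n$ lies in $\BPi^0_\alpha$, and $\alpha < \alpha + 1$, so $S \in \BSigma^0_{\alpha+1}$ with $\alpha + 1 < \omega_1$. For $\alpha = 0$, the classes $\BSigma^0_0$ and $\BPi^0_0$ coincide (both are the clopen sets), so $S$ is already in $\BSigma^0_0$.

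The only substantive step is that the circuit rank is countable, which rests on the regularity of $\omega_1$. Everything else is a direct appeal to Corollary~\ref{rank-vs-circuit}.
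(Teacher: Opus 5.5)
Your proof is correct and follows the paper's approach: the paper treats this statement as an immediate consequence of Corollary~\ref{rank-vs-circuit}. You also make explicit two points the paper leaves tacit, namely that a countable well-founded tree has essential rank below $\omega_1$, and that a $\BPi^0_\alpha$ set lies in $\BSigma^0_{\alpha+1}$ and so is Borel under the paper's $\BSigma$-based definition.
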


\subsection{Nondeterminism}
\label{sec:nondeterminism}

Just as in the finite setting, we can consider \emph{nondeterministic circuits} which take as input two sets of variables $x_0, x_1, \dots$ and $y_0, y_1, \dots$. We call $y_0, y_1, \dots$ the \emph{nondeterministic inputs}, and we say that a nondeterministic circuit $C$ \emph{computes} the set
\begin{equation*}
    \{x \in 2^\NN : C(x,y) = 1 \text{ for some } y \in 2^\NN\} \text{.}
\end{equation*}
It is immediate from Corollary~\ref{circuit-borel} that the analytic sets are precisely the sets computed by nondeterministic circuits. In fact, we can make the stronger claim that every analytic set is computed by a depth-$2$ nondeterministic circuit. The argument works just as in the proof that circuit satisfiability is polynomial-time reducible to $3\mathrm{SAT}$, nondeterministically guessing the values of the internal gates in the circuit and verifying that the resulting computation is valid. Since this result will be of crucial importance later, we include a proof for completeness:

\begin{theorem}
\label{depth-2}

A set $A \subseteq 2^\NN$ is analytic if and only if it is computed by a nondeterministic CNF (i.e.\ a depth-$2$ nondeterministic circuit whose output gate is an $\AND$-gate).

\end{theorem}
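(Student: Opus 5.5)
The reverse direction is immediate from what we already have. A nondeterministic CNF is in particular a nondeterministic circuit on the variables $(x,y)$. By Corollary~\ref{circuit-borel}, under the identification $2^\NN \times 2^\NN \cong 2^\NN$ it computes a Borel set $B \subseteq 2^\NN \times 2^\NN$. The set it computes nondeterministically is exactly the projection of $B$, so that set is analytic.

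For the forward direction, let $A$ be the projection of a Borel set $B$. By Corollary~\ref{circuit-borel}, $B$ is computed by some circuit $C(x,y)$. Using the remark on De Morgan's law, I may assume negations occur only at the literals. The countable tree $C$ has countably many gates, so I enumerate them as $G_0, G_1, \dots$, with $G_0$ the output gate. I then introduce a new nondeterministic input $z_k$ for each gate $G_k$, meant to guess the value of $G_k$. All the nondeterministic inputs $(y,z)$ are interleaved into a single string in $2^\NN$. The CNF $D(x,y,z)$ is the conjunction of the following clauses:
\begin{itemize}
    \item the unit clause $z_0$;
    \item for each input gate $G_k$ with literal $\ell$, the clauses $(\overline{z}_k \OR \ell)$ and $(z_k \OR \overline{\ell})$, where $\overline{\ell}$ is the negated literal and the constants $0,1$ are handled in the obvious way;
    \item for each $\AND$-gate $G_k$ with children $G_j$, $j \in J_k$: the clause $(\overline{z}_k \OR z_j)$ for each $j \in J_k$, together with the single clause $z_k \OR \bigvee_{j \in J_k} \overline{z}_j$;
    \item dually, for each $\OR$-gate $G_k$: the clauses $(z_k \OR \overline{z}_j)$ for each $j \in J_k$, together with the clause $\overline{z}_k \OR \bigvee_{j \in J_k} z_j$.
\end{itemize}
There are countably many clauses and each is a countable disjunction of literals, so $D$ is a depth-$2$ nondeterministic circuit whose output gate is an $\AND$-gate.

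Next I verify that $D$ computes $A$. If $x \in A$, choose $y$ with $C(x,y)=1$ and set $z_k = G_k(x,y)$; every clause is then satisfied. Conversely, suppose $D(x,y,z)=1$. I claim $z_k = G_k(x,y)$ for every $k$, and prove it by well-founded induction on the tree, from the leaves upward. For an input gate, the two clauses force $z_k = \ell$. For an $\AND$-gate, the clauses $(\overline{z}_k \OR z_j)$ together with the big clause say exactly that $z_k = \bigwedge_{j \in J_k} z_j$. By the inductive hypothesis the right-hand side equals $G_k(x,y)$. The $\OR$ case is symmetric. The unit clause $z_0$ then gives $C(x,y) = G_0(x,y) = 1$, so $x \in A$.

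The main obstacle is making this induction legitimate when gates have infinitely many children. Well-foundedness is exactly what permits induction on the tree here, and it is crucial: without it, the guessed values could be self-consistent but wrong. Infinite fan-in is harmless because each big clause is itself a countable disjunction, which a CNF allows. A minor bookkeeping point is that the gates used as leaves of $C$ must all be literal gates, which is guaranteed by the definition of a circuit.
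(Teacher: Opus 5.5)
Your proof is correct and takes essentially the paper's approach: a Tseitin-style construction that guesses each gate's value with a fresh nondeterministic variable, imposes countably many local consistency clauses, and uses well-foundedness to conclude that the guesses equal the true gate values. The differences are minor: you also introduce guess variables for the input gates (the paper substitutes the literal $w_n$ directly), and you write out the inductive verification and the reverse direction, which the paper leaves implicit.
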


\begin{proof}

The reverse direction is immediate. For the forward direction, let $C$ be a nondeterministic circuit computing $A$ whose output gate is an internal gate (otherwise the result is trivial). Let $(G_n)_{n \in \NN}$ be an enumeration of the gates of $C$, with $G_0$ being the output gate. We define new nondeterministic variables $z_i$ for each $i \in \NN$ such that $G_i$ is an internal gate. For every $n \in \NN$, let $w_n$ denote the variable $z_n$ if $G_n$ is an internal gate and the label of $G_n$ otherwise. We then have $x \in A$ if and only if there exist $y,z \in 2^\NN$ such that $z_0 = 1$ and the following hold for each $i \in \NN$ with $G_i$ being an internal gate:
\begin{itemize}
    \item If $G_i$ is an $\OR$-gate with children $G_{n_0},G_{n_1},\dots$, then we have $z_i \OR \overline{w}_{n_j} = 1$ for each $j \in \NN$ and also $\overline{z}_i \OR \bigvee_{k \in \NN} w_{n_k} = 1$.
    \item If $G_i$ is an $\AND$-gate with children $G_{n_0},G_{n_1},\dots$, then we have $z_i \OR \bigvee_{j \in \NN} \overline{w}_{n_j} = 1$ and also $\overline{z}_i \OR w_{n_k} = 1$ for each $k \in \NN$.
\end{itemize}
By taking the conjunction of $z_0$ and all the clauses described above for each internal gate $G_i$, we obtain our desired nondeterministic CNF computing $A$ (up to relabeling).
\end{proof}

\begin{remark}

Because of the existential quantifier and the monotonicity of $\AND$- and $\OR$-gates, we still get an equivalent CNF if the clauses $z_i \OR \overline{w}_{n_j}$ for $j \in \NN$ and $z_i \OR \bigvee_{j \in \NN} \overline{w}_{n_j}$ are omitted.

\end{remark}

\begin{remark}

This result can be rephrased as saying that every analytic subset of $2^\NN$ is the projection of a $G_\delta$ (i.e.\ $\BPi^0_2$)-subset of $2^\NN \times 2^\NN$. Our combinatorial proof is quite different from the standard topological argument (and perhaps more elementary).

\end{remark}

\section{Infinite communication complexity}
\label{sec:infinite-cc}

In this section, we aim to define the communication complexity of functions from $2^\NN \times 2^\NN$ to $\{0,1\}$ or $\NN$. One immediate observation is that our protocol trees must be infinite in order to study any interesting problems. In order to ensure that every protocol computes a well-defined function, we also require that protocols be well-founded. By K\H{o}nig's lemma, any well-founded tree which is finitely branching must itself be finite. For this reason, we allow protocol trees to have countably infinite branching. In other words, Alice and Bob communicate by sending natural numbers instead of bits.

\begin{definition}[Infinite communication protocols]
\label{def:infinite-cc}

Let $X = Y = 2^\NN$ and $Z$ be countable. A \emph{protocol} $P$ for a communication problem $f \colon X \times Y \rightarrow Z$ is a countable, well-founded tree with internal nodes $v$ labeled by functions $a_v \colon X \rightarrow \NN$ or $b_v \colon Y \rightarrow \NN$ and leaf nodes labeled by elements of $Z$. Every edge is labeled with some $n \in \NN$. For each internal node $v$ and natural number $n$ in the range of $a_v$ or $b_v$, exactly one edge connecting $v$ to one of its children must be labeled with $n$. The function computed by $P$ is defined analogously to the finite case (see Section~\ref{sec:finite-cc}).

\end{definition}

In the finite setting, every problem has a communication protocol (Alice or Bob can just send the other person their string in its entirety). In the infinite setting, on the other hand, this is not the case (we will prove this shortly). While the cost of a communication protocol is an essential notion in the finite case, the most interesting question in the infinite setting is often whether a protocol even exists. Nonetheless, we can still define the cost of protocols and communication complexity of functions in the infinite setting. Essential rank is used in place of the ordinary notion of rank so that Corollary~\ref{cost-vs-rank} holds.

\begin{definition}[Infinite communication complexity]

The \emph{cost} of a protocol is the essential rank (see Definition~\ref{def:essential-rank}) of the underlying tree. If a function $f \colon X \times Y \rightarrow Z$ has a communication protocol, the \emph{communication complexity} $D(f)$ of $f$ is the minimum cost of a protocol computing $f$.

\end{definition}

\subsection{Lower bounds}

Just as in the finite case, we can use partition numbers to prove lower bounds on infinite communication problems:

\begin{theorem}
\label{partition-bound}

If $f\colon X \times Y \rightarrow Z$ has a communication protocol, then $X \times Y$ can be partitioned into countably many $f$-monochromatic rectangles.

\end{theorem}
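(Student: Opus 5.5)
We mimic the proof of Theorem~\ref{finite-partition-bound}, replacing finiteness with countability and using well-foundedness to guarantee that every input reaches a leaf. Let $P$ be a protocol computing $f$. For each node $v$ of $P$, consider the root-to-$v$ path $v_0, v_1, \dots, v_k = v$. This path is finite because every node of a tree lies at finite depth. Let $n_j$ be the label of the edge from $v_j$ to $v_{j+1}$. We define $X_v \subseteq X$ as the set of $x$ with $a_{v_j}(x) = n_j$ for every $j<k$ such that $v_j$ is Alice's node. Symmetrically, $Y_v \subseteq Y$ is the set of $y$ with $b_{v_j}(y) = n_j$ for every $j<k$ such that $v_j$ is Bob's node. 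Set $R_v = X_v \times Y_v$.

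The key claim, proved by induction on depth, is that $R_v$ is exactly the set of inputs $(x,y)$ whose traversal of $P$ passes through $v$. The base case is the root, where $R_{v_0} = X \times Y$. For the inductive step, an input at $v_j$ moves to the unique child whose edge is labeled by $a_{v_j}(x)$ (or $b_{v_j}(y)$). That child exists by Definition~\ref{def:infinite-cc}, since the value lies in the range of the labeling function. So the input proceeds to $v_{j+1}$ precisely when the corresponding single constraint holds. Because each constraint involves only $x$ or only $y$, the set stays a product.

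Now let $L$ be the set of leaves of $P$. Since $P$ is countable, $L$ is countable. Every input $(x,y)$ traces a path from the root. At each internal node there is a well-defined next child, and because $P$ is well-founded the path cannot be infinite. Hence the path terminates at a leaf, so the sets $R_v$ for $v \in L$ cover $X \times Y$.

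These sets are also pairwise disjoint, because the traversal path of an input is unique: two distinct leaves are incomparable, so no input passes through both. Finally, $P$ outputs the label of $v$ on all of $R_v$. Since $P$ computes $f$, the function $f$ is constant on $R_v$, so each $R_v$ is $f$-monochromatic. Discarding any empty $R_v$ gives the required countable partition.

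The only point requiring care is the role of well-foundedness: without it some inputs could follow an infinite branch and fall outside every leaf rectangle. Once that is in place, everything else is a direct transcription of the finite proof.
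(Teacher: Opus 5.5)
Your proof is correct and follows the paper's argument. The paper also takes, for each leaf, the set of inputs reaching it, notes via the finite-case reasoning that it is an $f$-monochromatic rectangle, and uses countability of $P$ to get a countable partition of $X \times Y$. Your explicit use of well-foundedness, to ensure every input reaches a leaf so that the rectangles cover $X \times Y$, spells out a step the paper leaves implicit.
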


\begin{proof}

Suppose $f$ has a protocol $P$. The same argument as in the finite case tells us that the set of inputs reaching a given leaf node $v$ form an $f$-monochromatic rectangle $R_v$. Since $P$ has countably many leaf nodes, the collection of sets $R_v$ for all leaf nodes $v$ is a countable partition of $X \times Y$ into $f$-monochromatic rectangles.
\end{proof}

\begin{corollary}
\label{eq-lower-bound}

The function $\EQ \colon 2^\NN \times 2^\NN \rightarrow \{0,1\}$ defined by $\EQ(x,y) = 1$ if $x = y$ and $\EQ(x,y) = 0$ otherwise has no communication protocol.

\end{corollary}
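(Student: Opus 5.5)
We argue by contradiction through Theorem~\ref{partition-bound}, mimicking the fooling set argument of Lemma~\ref{fooling-sets}. Suppose $\EQ$ has a communication protocol. Then Theorem~\ref{partition-bound} gives a partition of $2^\NN \times 2^\NN$ into countably many $\EQ$-monochromatic rectangles $R_0, R_1, \dots$. We will show that every such partition must contain uncountably many rectangles, which is the desired contradiction.

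The fooling set here is the diagonal $\mathcal{F} = \{(x,x) : x \in 2^\NN\} \subseteq \EQ^{-1}(\{1\})$. First we check that any $\EQ$-monochromatic rectangle $R = A \times B$ contains at most one point of $\mathcal{F}$. Suppose $(x,x),(x',x') \in R$ with $x \neq x'$. Then $x, x' \in A$ and $x, x' \in B$, so $(x,x') \in R$ as well. But $\EQ(x,x') = 0$ while $\EQ(x,x) = 1$, contradicting monochromaticity.

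Since the rectangles $R_n$ cover $2^\NN \times 2^\NN$, each of the points of $\mathcal{F}$ must lie in some $R_n$, and each $R_n$ contains at most one of them. This gives an injection from $\mathcal{F}$ into $\NN$. But $\mathcal{F}$ is in bijection with $2^\NN$, which is uncountable by Cantor's diagonal argument, so no such injection exists. Hence $\EQ$ has no protocol.

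No step is a serious obstacle. The only point to check is that Theorem~\ref{partition-bound} gives countably many rectangles: the leaves of a countable tree are countable. Since there is no counting bound as in the finite case, the contradiction has to come from cardinality (countable versus continuum) rather than from the $\log_2$ estimate of Theorem~\ref{finite-partition-bound}.
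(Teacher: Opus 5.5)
Your proposal is correct and takes essentially the same route as the paper: the diagonal $\{(x,x) : x \in 2^\NN\}$ is an uncountable fooling set, each $\EQ$-monochromatic rectangle contains at most one of its points, and the contrapositive of Theorem~\ref{partition-bound} gives the contradiction. You additionally spell out the rectangle check and the injection into $\NN$, which the paper covers with ``the same argument as in the finite case.''
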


\begin{proof}

The set $\mathcal{F} = \{(x,x) : x \in 2^\NN\}$ is uncountable, and by the same argument as in the finite case, any $\EQ$-monochromatic rectangle contains at most one element of $\mathcal{F}$. This means that any $\EQ$-monochromatic partition of $X \times Y$ requires uncountably many rectangles, and the contrapositive of Theorem~\ref{partition-bound} gives us the desired result.
\end{proof}

\begin{remark}
\label{rem:disj-lower-bound}

If we define $\DISJ \colon 2^\NN \times 2^\NN \rightarrow \{0,1\}$ by $\DISJ(x,y) = 1$ if and only if $x_i \wedge y_i = 0$ for all $i \in \NN$, then the fooling set $\mathcal{F} = \{(x,\overline{x}) : x \in 2^\NN\}$ tells us that $\DISJ$ also has no communication protocol.

\end{remark}

\begin{remark}
\label{efficient-computation}

Infinite communication problems with communication protocols seem to correspond loosely with finite communication problems with \emph{efficient} protocols (in the sense that the communication cost grows polylogarithmically with the input length). The functions $\EQ$ and $\DISJ$ give us examples of problems which are hard in both settings. For an example of an easy communication problem with an easy finite analog, consider the \emph{finite disjointness} function $\DISJ_{<\NN}$, which outputs $1$ whenever $x$ and $y$ satisfy $\DISJ(x,y) = 1$ and each contain finitely many ones. The finite analog is the \emph{$k$-disjointness} function $\DISJ_{k,n}$ (for fixed $k$), which outputs $1$ whenever $x$ and $y$ satisfy $\DISJ_n(x,y) = 1$ and each contain at most $k$ ones \cite{nisan97}.

\end{remark}

The correspondence in the previous remark gives us heuristic evidence that Definition~\ref{def:infinite-cc} is a reasonable extension of communication complexity to the infinite setting. The next section provides more concrete evidence that our definition of infinite communication complexity is useful in descriptive set theory.

\section{Infinite Karchmer--Wigderson games}
\label{sec:kw}

In \cite{kw1990}, Karchmer and Wigderson define for each function $f \colon \{0,1\}^n \rightarrow \{0,1\}$ a communication problem which completely characterizes the minimum depth of a (finite, fan-in $2$) circuit computing $f$. This communication problem is called the \emph{Karchmer--Wigderson (KW) game} and is a useful tool in proving circuit lower bounds. With our definition of infinite communication complexity and characterization of Borel sets via infinite circuits, we can prove connections between Borel sets and a certain infinite analog of KW games.

\begin{definition}[Karchmer--Wigderson games]

Fix two sets $A,B \subseteq 2^\NN$. In the \emph{KW game} $\KW_{A,B}$, Alice is given a string $x \in A$, Bob is given a string $y \in B$, and they must output an index $i \in \NN$ such that $x_i \neq y_i$.

\end{definition}

\begin{remark}

We do not require in the above definition that $A$ and $B$ are disjoint (though this is obviously a necessary condition for $\KW_{A,B}$ to be solvable).

\end{remark}

\begin{remark}

Unlike many games in descriptive set theory, KW games (along with all other communication games) are \emph{cooperative,} rather than competitive.

\end{remark}

\begin{remark}

The game $\KW_{A,B}$ defines a relation rather than a function, since there can be multiple indices $i \in \NN$ for which $x_i \neq y_i$. We say that a protocol $P$ solves the game $\KW_{A,B}$ if the $P(x,y)$-th bit of $x$ and $y$ disagree for all $(x,y) \in A \times B$.

\end{remark}

\begin{definition}

We define the \emph{communication complexity} $D(\KW_{A,B})$ of a KW game $\KW_{A,B}$ as the minimum cost of a protocol solving $\KW_{A,B}$.

\end{definition}

\begin{remark}
\label{rem:always-terminating}

When thinking about KW games, it is crucial to distinguish between always-terminating procedures and well-founded protocols. The game $\KW_{A,B}$ has an always-terminating procedure whenever $A \cap B = \emptyset$, since Alice and Bob can first confirm that $(x,y) \in A \times B$ and then share the bits of $x$ and $y$ one-by-one until finding a bit where the two strings disagree. However, this procedure is generally ill-founded and thus not a protocol.

\end{remark}

\begin{remark}
\label{rem:closed-kw-games}

The procedure in Remark~\ref{rem:always-terminating} is a well-founded protocol when $A$ and $B$ are closed and disjoint. This is because any infinite branch would give us a point in the closure of both $A$ and $B$, contradicting the fact that $A \cap B = \emptyset$. K\H{o}nig's lemma tells us that finitely-branching protocols are themselves finite, so we can conclude that $D(\KW_{A,B}) = 0$ whenever $A$ and $B$ are closed and disjoint.

\end{remark}

\subsection{KW games and Borel separability}

We now prove Theorem~\ref{borel-kw-equivalence}, the central connection between KW games and Borel sets. This link is the motivation for our definitions of infinite communication complexity and KW games, and it is the essential piece of machinery for our upcoming results.

\begin{definition}

Let $A,B \subseteq 2^\NN$. We say that a set $S \subseteq 2^\NN$ \emph{separates} $A$ and $B$ if $A \subseteq S$ and $S \cap B = \emptyset$. We call $A$ and $B$ \emph{Borel separable} whenever $A$ and $B$ can be separated by a Borel set.

\end{definition}

Recall that Theorem~\ref{borel-kw-equivalence} states that $A$ and $B$ are Borel separable if and only if $\KW_{A,B}$ admits a communication protocol. We are now ready to prove this fact.

\begin{proof}[Proof of Theorem~\ref{borel-kw-equivalence}]

First suppose that $A$ and $B$ are Borel separable. Then by Corollary~\ref{circuit-borel}, there exists a circuit $C$ such that $C(x) = 1$ for all $x \in A$ and $C(y) = 0$ for all $y \in B$. Consider the following protocol for $\KW_{A,B}$: Given $x \in A$ and $y \in B$, Alice and Bob begin by examining the output gate $G$ of $C$, which satisfies $G(x) = 1$ and $G(y) = 0$. Let $(G_n)_{n \in \NN}$ denote the children of $G$. If $G$ is an $\OR$-gate, then we have $G_j(x) = 1$ for some $j \in \NN$ and $G_n(y) = 0$ for all $n \in \NN$. Alice shares $j$ with Bob in this case. If $G$ is an $\AND$-gate, then we have $G_n(x) = 1$ for all $n \in \NN$ and $G_k(y) = 0$ for some $k \in \NN$. Bob shares $k$ with Alice in this case. In both cases, Alice and Bob find a descendant $G'$ of $G$ such that $G'(x) = 1$ and $G'(y) = 0$. Alice and Bob repeat this process until finding a literal $x_i$ or $\overline{x}_i$ evaluating to $1$ on $x$ and $0$ on $y$. Thus Alice and Bob can guarantee correctness by outputting $i$. Since $C$ is well-founded, this procedure is also well-founded and gives us a protocol for $\KW_{A,B}$.

Now suppose $\KW_{A,B}$ has a protocol. For each node $v$ in $P$, let $R_v = A_v \times B_v \subseteq A \times B$ denote the set of inputs $(x,y) \in A \times B$ on which the protocol reaches $v$ (the argument in Theorem~\ref{partition-bound} tells us that these sets are indeed rectangles). Suppose first that $v$ is a leaf node. Letting $i$ denote the label of $v$, the correctness of $P$ tells us that $x_i \neq y_i$ for all $(x,y) \in R_v$. As $R_v$ is a rectangle, this means that either $x_i = 1$ and $y_i = 0$ for all $(x,y) \in R_v$ or $x_i = 0$ and $y_i = 1$ for all $(x,y) \in R_v$. Call $v$ a \emph{positive leaf} if the first case holds and a \emph{negative leaf} otherwise.

Now replace each of Alice's nodes with an $\OR$-gate, each of Bob's nodes with an $\AND$-gate, each positive leaf labeled $i$ with the literal $x_i$, and each negative leaf labeled $i$ with the literal $\overline{x}_i$. The resulting circuit $C$ is well-founded since $P$ is a well-founded protocol. It therefore suffices to show that $C(x) = 1$ for all $x \in A$ and $C(y) = 0$ for all $y \in B$. To this end, let $G_v$ denote the gate corresponding to the node $v$ in $P$. We claim that, for each vertex $v$ in $P$, we have $G_v(x) = 1$ for all $x \in A_v$ and $G_v(y) = 0$ for all $y \in B_v$. We saw already that the claim holds when $v$ is a leaf node. If $v$ is an internal node, we may inductively suppose that the claim holds for all children $v_0,v_1,\dots$ of $v$. If $v$ is one of Alice's nodes, then for any $x \in A_v$, we have $x \in A_{v_j}$ for some $j \in \NN$. Thus $G_{v_j}(x) = 1$ by the inductive hypothesis, implying that $G_v(x) = \bigvee_{j \in \NN} G_{v_j}(x) = 1$. Similarly, if $v$ is one of Bob's nodes, then every $y \in B_v$ satisfies $G_{v_k}(y) = 0$ for some $k \in \NN$ and hence $G_v(y) = \bigwedge_{k \in \NN} G_{v_k}(y) = 0$. Thus the claim holds for $v$, completing the induction. The root node $v$ of $P$ satisfies $R_v = A \times B$, so applying the claim to this node completes the proof.
\end{proof}

\begin{corollary}
\label{borel-kw-corollary}

A set $S$ is Borel if and only if $\KW_{S,S^c}$ has a communication protocol.

\end{corollary}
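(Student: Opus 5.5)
This follows directly from Theorem~\ref{borel-kw-equivalence} with $A = S$ and $B = S^c$. The plan is to show that "$S$ and $S^c$ are Borel separable" is equivalent to "$S$ is Borel". Once that is done, the corollary is just a restatement of the theorem.

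For the forward direction, suppose $\KW_{S,S^c}$ has a communication protocol. Then Theorem~\ref{borel-kw-equivalence} gives a Borel set $T$ with $S \subseteq T$ and $T \cap S^c = \emptyset$. The second condition says $T \subseteq S$, so $T = S$ and $S$ is Borel. For the reverse direction, if $S$ is Borel, then $S$ itself separates $S$ from $S^c$. Theorem~\ref{borel-kw-equivalence} then produces a protocol for $\KW_{S,S^c}$. Equivalently, one can run the first half of that proof on a circuit computing $S$, which exists by Corollary~\ref{circuit-borel}.

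There is essentially no obstacle. The only point to check is the trivial set-theoretic fact that any set separating $S$ from its complement must equal $S$.
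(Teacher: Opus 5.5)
Your proof is correct and matches the paper's approach: the paper states this corollary without proof, as the specialization of Theorem~\ref{borel-kw-equivalence} to $A = S$, $B = S^c$. Your observation that the only set separating $S$ from $S^c$ is $S$ itself is exactly the step that specialization needs.
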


The conversion process in the previous proof exactly preserves the tree structure underlying the protocols and circuits involved. As the essential rank of this underlying tree is equal to both the cost of the protocol and the rank of the circuit, we obtain the following corollary:

\begin{corollary}
\label{cost-vs-rank}

If $A$ and $B$ are Borel separable subsets of $2^\NN$, then the minimum rank of a Borel set separating $A$ and $B$ is precisely $D(\KW_{A,B})$. The Borel rank of a Borel set $B$ is precisely $D(\KW_{B,B^c})$.

\end{corollary}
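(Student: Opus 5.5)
The plan is to read Corollary~\ref{cost-vs-rank} off the two conversions in the proof of Theorem~\ref{borel-kw-equivalence}. Each conversion keeps the underlying tree fixed and only relabels nodes. I will compare the two quantities through Corollary~\ref{rank-vs-circuit}, which says that a set lies in $\BSigma^0_\alpha \cup \BPi^0_\alpha$ exactly when some rank-$\alpha$ circuit computes it. One point needs checking first: essential rank depends only on the shape of the tree, that is, on which subtrees are finite and on the sup recursion, and not on the labels. So a relabeling of nodes preserves essential rank.

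\textbf{First inequality.} Suppose $S$ separates $A$ and $B$ and has Borel rank $\alpha$. By Corollary~\ref{rank-vs-circuit}, $S$ is computed by a circuit $C$ of rank $\alpha$. The forward direction of the proof of Theorem~\ref{borel-kw-equivalence} turns $C$ into a protocol whose tree is exactly that of $C$: $\OR$-gates become Alice's nodes, $\AND$-gates become Bob's nodes, and literals become leaves labeled with their index. Two details must be checked here.
\begin{itemize}
\item A constant input gate $0$ or $1$ can never be reached on an input pair $(x,y)$ with $G(x)=1$ and $G(y)=0$. Such a leaf can therefore be given an arbitrary label.
\item The edge-labeling convention needs nothing more. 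Alice sends the index $j$ of a child, so the children are exactly the edges labeled by natural numbers.
\end{itemize}
The resulting protocol has cost $\alpha$, so $D(\KW_{A,B}) \le \alpha$.

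\textbf{Reverse inequality.} Take a protocol $P$ of cost $D(\KW_{A,B})$. The reverse direction of the same proof gives a circuit on the same tree, so its rank equals the cost of $P$. By Corollary~\ref{rank-vs-circuit}, the set it computes lies in $\BSigma^0_\alpha\cup\BPi^0_\alpha$ for $\alpha = D(\KW_{A,B})$, and that set separates $A$ and $B$. Its Borel rank is therefore at most $D(\KW_{A,B})$. Combining the two inequalities gives equality.

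\textbf{Second sentence.} A set $S$ is the only set separating $S$ and $S^c$. Apply the first part with $A=S$ and $B=S^c$, which is allowed because Corollary~\ref{borel-kw-corollary} shows the game is solvable exactly when $S$ is Borel.

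\textbf{Main obstacle.} I expect the only real subtlety to be confirming that Corollary~\ref{rank-vs-circuit} is used in the precise form "computed by a circuit of rank exactly $\alpha$" and not "at most $\alpha$". The first inequality only needs the forward implication at the set's actual rank, and the second only needs the backward implication at the circuit's actual rank, so either reading suffices.

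There is also a possible degenerate case in the reverse direction: the protocol tree could be finite while $A$ and $B$ are still infinite sets. This is harmless, since a finite tree has essential rank $0$ and yields a finite circuit, which by Lemma~\ref{clopen} computes a clopen set.
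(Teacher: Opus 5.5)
Your proposal is correct and follows the paper's own argument. Both conversions in the proof of Theorem~\ref{borel-kw-equivalence} preserve the underlying tree, so essential rank equals both protocol cost and circuit rank, and Corollary~\ref{rank-vs-circuit} turns circuit rank into Borel rank in each direction; your two-inequality bookkeeping, your treatment of constant input gates, and your observation that $S$ is the only set separating $S$ and $S^c$ simply spell out details the paper leaves implicit.
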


\begin{remark}

The definition of infinite communication complexity allows for the functions $a_v$ and $b_v$ to be arbitrary. Thus a protocol $P$ solving some KW game $\KW_{A,B}$ might not have its nodes labeled by Borel functions. Given such a protocol, however, we can use the previous proof to obtain a circuit $C$ separating $A$ and $B$ and then convert $C$ back to a protocol $P'$ (with $j$ and $k$ chosen minimally) solving $\KW_{A,B}$ with Borel functions as labels. The underlying tree structure is preserved in the process. This means that the communication complexity of $\KW_{A,B}$ is always ``witnessed'' by a protocol with internal nodes labeled by Borel functions.

\end{remark}

\begin{remark}

The observation that $D(\KW_{A,B}) = 0$ when $A$ and $B$ are closed (see Remark~\ref{rem:closed-kw-games}) gives us an alternative proof that disjoint closed subsets of $2^\NN$ are clopen-separable.

\end{remark}

\section{Existence results using KW games}
\label{sec:existence}

The rest of this paper is devoted to applications of infinite Karchmer--Wigderson games. In this section, we will prove positive results, demonstrating that Borel sets exist by solving the appropriate KW games. In particular, we will prove the analytic separation theorem and the fact that any monotone Borel function is computed by a monotone circuit.

\subsection{The analytic separation theorem}

The analytic separation theorem states that any two disjoint analytic sets are separable by a Borel set. In particular, if a set $A$ and its complement are both analytic, then $A$ must actually be a Borel set. KW games give us a completely elementary and purely combinatorial proof:

\begin{theorem}
\label{analytic-separation}

Any two disjoint analytic sets $A,A' \subseteq 2^\NN$ are Borel separable.

\end{theorem}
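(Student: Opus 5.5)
We would prove Theorem~\ref{analytic-separation} through Theorem~\ref{borel-kw-equivalence}: it suffices to build a well-founded communication protocol solving $\KW_{A,A'}$. First apply Theorem~\ref{depth-2} to both sets. This gives nondeterministic CNFs $C(x,y)$ computing $A$ and $C'(x,z)$ computing $A'$, where $y$ and $z$ are the nondeterministic inputs. Enumerate the clauses of $C$ as $(K_n)_{n \in \NN}$ and the clauses of $C'$ as $(K'_n)_{n \in \NN}$. Each clause is a countable disjunction of literals in the $x$- and $y$-variables (respectively the $x$- and $z$-variables).

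In the protocol, Alice holds $x \in A$ and privately fixes a witness $y$ with $C(x,y)=1$. Bob holds $x' \in A'$ and fixes a witness $z$ with $C'(x',z)=1$. The protocol runs in rounds $n = 0,1,2,\dots$.
\begin{enumerate}
\item In round $n$, Alice announces the bits $x_n$ and $y_n$, together with the index of some literal of $K_n$ that is true under $(x,y)$. Such a literal exists because $C(x,y)=1$.
\item Bob then announces $x'_n$ and $z_n$, together with the index of a literal of $K'_n$ that is true under $(x',z)$.
\end{enumerate}
Each announcement is a single natural number, so it fits Definition~\ref{def:infinite-cc}.

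After each message we check the revealed information for a conflict concerning some $x$-variable $x_i$. A conflict means that Alice's information (her revealed bit $x_i$, or a witness literal of hers on $x_i$) disagrees with Bob's information (his revealed bit $x'_i$, or a witness literal of his on $x_i$). Alice's claims are all true of $x$ and Bob's are all true of $x'$, so such a conflict certifies $x_i \neq x'_i$. We make this node a leaf labeled $i$. Any node at which the revealed data is inconsistent within Alice's own claims, or within Bob's own claims, is never reached on genuine inputs. We also make such nodes leaves, with an arbitrary label. The resulting tree is countable, and every reachable leaf outputs a correct answer.

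The main obstacle is well-foundedness. This must hold for all branches of the tree, not merely the branches reached by actual inputs, so this is exactly where the pruning above is needed. Suppose there were an infinite branch. Along it no conflict ever occurs and every bit of $x$, $y$ and $z$ is eventually revealed. Since no conflict occurs, Alice's and Bob's $x$-bits agree, so they define one common string $\hat x$, together with strings $\hat y$ and $\hat z$. Each announced witness literal involves a variable whose value is revealed at some later round. By consistency that value makes the literal true, so every clause $K_n$ is satisfied by $(\hat x,\hat y)$ and every clause $K'_n$ by $(\hat x,\hat z)$. Thus $C(\hat x,\hat y)=C'(\hat x,\hat z)=1$, giving $\hat x \in A \cap A'$, which contradicts disjointness. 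Hence the tree is well-founded and is a protocol for $\KW_{A,A'}$, and Theorem~\ref{borel-kw-equivalence} yields a Borel set separating $A$ and $A'$.

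The delicate point to verify carefully is the bookkeeping of conflicts. We must check two things: that the leaf labels are correct on reachable inputs, and that every branch surviving all checks really does produce a simultaneous satisfying assignment of both CNFs.
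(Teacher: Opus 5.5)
Your proposal is correct and is essentially the paper's proof. It uses the nondeterministic CNFs from Theorem~\ref{depth-2}, privately fixed witnesses, and alternating announcements of satisfied literals clause by clause. Leaves are labeled $i$ at a contradiction on $x_i$, and an infinite branch yields a point of $A \cap A'$. The differences are cosmetic. The paper does not reveal the bits $x_n,y_n,x'_n,z_n$; it takes any assignment satisfying the set $\mathcal{L}$ of announced literals, with unmentioned variables set arbitrarily. It also leaves implicit the internal consistency of each player's literals along a branch, which you secure by explicit pruning.
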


\begin{proof}

By Theorem~\ref{borel-kw-equivalence}, it suffices to find a communication protocol for $\KW_{A,A'}$. To this end, we use Theorem~\ref{depth-2} to find nondeterministic CNFs $C(x,y)$ and $C'(x',y')$ computing $A$ and $A'$, respectively. These circuits are shared knowledge between Alice and Bob, since they do not depend on $x$ or $x'$. Let $(G_n)_{n \in \NN}$ denote the clauses (i.e.\ rank-$1$ subcircuits) of $C$ and $(G_n')_{n \in \NN}$ denote the clauses of $C'$. We call a circuit or gate \emph{satisfied} if it evaluates to $1$. Note that a CNF is satisfied if and only if each of its clauses contains a satisfied literal.

Given $(x,x') \in A \times A'$, Alice and Bob first individually choose strings $y,y' \in 2^\NN$ such that $C(x,y) = C'(x',y') = 1$. Alice then tells Bob a literal in $G_0$ which is satisfied, Bob tells Alice a literal in $G_0'$ which is satisfied, Alice shares a literal in $G_1$ which is satisfied, etc. If Alice and Bob ever share both the literals $x_i$ and $\overline{x}_i'$ or both the literals $\overline{x}_i$ and $x_i'$, they output $i$.

This protocol always outputs a correct value, so we just need to show that it is well-founded. To this end, suppose for the sake of contradiction that the protocol has an infinite branch. Such an infinite branch specifies exactly one literal in each clause of $C$ or $C'$. Let $\mathcal{L}$ denote the collection of all literals selected along this branch. Because the protocol terminates whenever both the literals $x_i$ and $\overline{x}_i'$ or both the literals $\overline{x}_i$ and $x_i'$ are shared, we must have for each $i \in \NN$ that the set $\mathcal{L} \cap \{x_i,\overline{x}_i,x_i',\overline{x}_i'\}$ is a subset of either $\{x_i, x_i'\}$ or $\{\overline{x}_i, \overline{x}_i'\}$. We also have that for each $i \in \NN$, at most one of the literals $y_i$ and $\overline{y}_i$ is contained in $\mathcal{L}$, and at most one of the literals $y_i'$ and $\overline{y}_i'$ is contained in $\mathcal{L}$. We can therefore choose $\tilde{x},\tilde{y},\tilde{y}' \in 2^\NN$ such that every literal in $\mathcal{L}$ is satisfied when evaluating $C(\tilde{x},\tilde{y})$ and $C'(\tilde{x},\tilde{y}')$. Because $\mathcal{L}$ contains a literal in every clause of $C$ or $C'$, we have $C(\tilde{x},\tilde{y}) = C'(\tilde{x},\tilde{y}') = 1$, implying that $\tilde{x} \in A \cap A'$. This contradicts the assumption that $A$ and $A'$ are disjoint.
\end{proof}

\subsection{Monotone circuits for monotone Borel sets}

\begin{definition}

We write $x \leq y$ for binary strings $x$ and $y$ whenever $x_i \leq y_i$ for every index $i$. We call a function $f \colon 2^\NN \rightarrow \{0,1\}$ or $f \colon \{0,1\}^n \rightarrow \{0,1\}$ \emph{monotone} if $f(x) \leq f(y)$ whenever $x \leq y$. 

\end{definition}

\begin{definition}

A circuit is \emph{monotone} if it contains no negated literals $\overline{x}_i$ for $i \in \NN$.

\end{definition}

It is easy to show that a function $f \colon \{0,1\}^n \rightarrow \{0,1\}$ is monotone if and only if it is computed by some monotone circuit. However, the story changes when we only consider polynomial-sized circuits. In \cite{tardos1988}, Tardos proved that there are monotone functions computable by polynomial-sized circuit families but not by polynomial-sized families of monotone circuits. This was both a surprising result and a disappointment to complexity theorists hoping to use superpolynomial lower bounds on monotone circuits to resolve the $\Pclass$ vs.\ $\NP$ question.

Given the analogy between computability in the infinite setting and efficient computability in the finite setting \cite{parity81,sipser1983,sipser1984}, one might expect a similar phenomenon in the infinite setting. However, it turns out that all monotone Borel functions have monotone circuits (this follows from the Lyndon interpolation theorem \cite{lyndon1959, dyck90}, and Kechris gives a self-contained proof \cite{kechris1995}). In other words, if a circuit computes a monotone function, one can always find a monotone circuit computing the same function. We provide a new proof of this fact using a monotone version of KW games which characterize the functions computed by monotone circuits.

\begin{definition}

In the \emph{monotone KW game} $\mKW_{A,B}$, Alice is given a string $x \in A$, Bob is given a string $y \in B$, and they must output an index $i \in \NN$ such that $x_i = 1$ and $y_i = 0$.

\end{definition}

\begin{definition}

Given $A,B \subseteq 2^\NN$, we say that a function $f \colon 2^\NN \rightarrow \{0,1\}$ \emph{separates} $A$ and $B$ if $f(x) = 1$ for all $x \in A$ and $f(y) = 0$ for all $y \in B$.

\end{definition}

\begin{theorem}
\label{monotone-kw-equivalence}

Two sets $A,B \subseteq 2^\NN$ can be separated by a monotone circuit if and only if $\mKW_{A,B}$ has a communication protocol.

\end{theorem}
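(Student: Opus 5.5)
The plan is to mirror the proof of Theorem~\ref{borel-kw-equivalence} in both directions, tracking only the sign of the literals.

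For the forward direction, fix a monotone circuit $C$ with $C(x)=1$ for all $x\in A$ and $C(y)=0$ for all $y\in B$. Alice and Bob walk down $C$ from the output gate, maintaining a gate $G$ with $G(x)=1$ and $G(y)=0$.
\begin{itemize}
\item At an $\OR$-gate, Alice names a child $G_j$ with $G_j(x)=1$; since $G(y)=0$, every child satisfies $G_j(y)=0$.
\item At an $\AND$-gate, Bob names a child $G_k$ with $G_k(y)=0$; since $G(x)=1$, every child satisfies $G_k(x)=1$.
\end{itemize}
Well-foundedness of $C$ makes this a protocol, and it ends at an input gate $L$ with $L(x)=1$ and $L(y)=0$. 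Because $C$ is monotone, $L$ is either a constant or a positive literal $x_i$. The constants $0$ and $1$ cannot satisfy both $L(x)=1$ and $L(y)=0$, so $L=x_i$ with $x_i=1$ and $y_i=0$. Outputting $i$ therefore solves $\mKW_{A,B}$.

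For the reverse direction, take a protocol $P$ for $\mKW_{A,B}$ and form the rectangles $R_v=A_v\times B_v$ as before. Convert Alice's nodes to $\OR$-gates and Bob's nodes to $\AND$-gates. Each leaf labeled $i$ becomes the positive literal $x_i$; this is correct because correctness of $P$ gives $x_i=1$ and $y_i=0$ on all of $R_v$, so every leaf is a "positive leaf." The resulting circuit is monotone, and it is well-founded because $P$ is.

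The same induction as in Theorem~\ref{borel-kw-equivalence} then shows $G_v(x)=1$ for $x\in A_v$ and $G_v(y)=0$ for $y\in B_v$. There is one subtlety: if some $A_v$ or $B_v$ is empty, the claim is vacuous, so nothing breaks. Applying the claim at the root completes the argument.

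There is no real obstacle here. The only point needing care is the forward direction's use of monotonicity to rule out negated literals, together with handling the constant input gates $0$ and $1$, which is immediate as noted above.
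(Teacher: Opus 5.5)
Your route is the paper's in both directions: walk down the monotone circuit, and convert Alice's and Bob's nodes to $\OR$- and $\AND$-gates with every leaf positive. Your explicit exclusion of the constant gates $0,1$ is a correct detail that the paper leaves implicit. The step that fails is your resolution of the empty case: ``if some $A_v$ or $B_v$ is empty, the claim is vacuous.''

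The inductive step needs $A_v,B_v$ defined by Alice's and Bob's edge conditions along the path to $v$; that is what gives $B_{v_j}=B_v$ at Alice's nodes and $A_{v_k}=A_v$ at Bob's nodes. With that definition a leaf can have $B_v=\emptyset\neq A_v$. Then $R_v=\emptyset$, so correctness of $P$ says nothing about the leaf's label $i$, while ``$G_v(x)=1$ for all $x\in A_v$'' is a non-vacuous assertion that can be false.

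This breaks the output circuit itself, not just the bookkeeping. Take $A=\{x:x_0=1\}$ and $B=\{y:y_0=0\}$, and let $P$ be a single Bob node with $b(y)=y_0$, whose $0$-child is a leaf labeled $0$ and whose $1$-child is a leaf labeled $1$. This $P$ solves $\mKW_{A,B}$, because no pair in $A\times B$ reaches the $1$-child. Yet your conversion yields $x_0\AND x_1$, which rejects $10000\cdots\in A$.

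Declaring $A_v=B_v=\emptyset$ whenever $R_v=\emptyset$ does not help. The leaf case becomes vacuous, but the Bob-node step then needs $G_{v_k}(x)=1$ at every child, including the unreachable one, and fails at the same place.

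The repair is short, and either of two fixes works.
\begin{enumerate}
\item Delete every node with $R_v=\emptyset$ before converting. A surviving internal node $u$ keeps the child reached by any pair in $R_u$, so the pruned tree is still well-founded and no internal node loses all its children. At surviving leaves $A_v$ and $B_v$ are both nonempty, so $x_i=1$ on $A_v$ and $y_i=0$ on $B_v$ genuinely follow from correctness. If $A\times B=\emptyset$, a constant circuit separates.
\item Keep the tree, but label a leaf with $B_v=\emptyset$ by the constant $1$ and a leaf with $A_v=\emptyset$ by $0$. This keeps the circuit monotone.
\end{enumerate}
The paper's conversion in the proof of Theorem~\ref{borel-kw-equivalence} passes over this same point. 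You were right to flag the issue, but it does not take care of itself.
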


\begin{proof}

If $C$ is a monotone circuit separating $A$ and $B$, then the protocol in the proof of Theorem~\ref{borel-kw-equivalence} outputs $i \in \NN$ such that some input gate in $C$ labeled with $x_i$ or $\overline{x}_i$ evaluates to $1$ on $x$ and $0$ on $y$. As $C$ is monotone, this literal must be $x_i$, implying that $x_i = 1$ and $y_i = 0$. Conversely, if $P$ is a protocol for $\mKW_{A,B}$, then every leaf node $v$ in $P$ is a positive leaf. Thus the conversion process in Theorem~\ref{borel-kw-equivalence} gives us a monotone circuit separating $A$ and $B$.
\end{proof}

We can now use this characterization to provide a simple proof that monotone Borel functions have monotone circuits.

\begin{theorem}
\label{monotone-borel}

Every monotone Borel function $f \colon 2^\NN \rightarrow \{0,1\}$ is computed by a monotone circuit.

\end{theorem}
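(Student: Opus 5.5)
Let $f$ be a monotone Borel function, and set $S = f^{-1}(\{1\})$. By Theorem~\ref{monotone-kw-equivalence}, it suffices to give a communication protocol for $\mKW_{S,S^c}$: a monotone circuit separating $S$ and $S^c$ computes exactly $f$. Since $S$ is Borel, Corollary~\ref{borel-kw-corollary} already supplies a protocol $P$ for the ordinary game $\KW_{S,S^c}$. The plan is to modify $P$ so that every leaf becomes positive, i.e.\ outputs an index $i$ with $x_i = 1$ and $y_i = 0$.

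The key trick is to run $P$ on modified inputs. Given $x \in S$ and $y \in S^c$, Alice and Bob run $P$ on the pair $(x \vee y,\, x \wedge y)$ (bitwise OR and AND) instead of $(x,y)$. By monotonicity, $x \le x\vee y$ gives $x \vee y \in S$, and $x \wedge y \le y$ gives $x \wedge y \in S^c$. So $P$ correctly outputs an $i$ with $(x\vee y)_i \ne (x\wedge y)_i$. That inequality means exactly that $x_i \ne y_i$. It does not yet say which of $x_i, y_i$ equals $1$, so this simple substitution alone is not enough.

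The fix is to run $P$ on $(x, x\wedge y)$. Both inputs lie in the right sets: $x \in S$, and $x \wedge y \in S^c$ by monotonicity. Any index $i$ where they differ satisfies $x_i = 1$ and $(x\wedge y)_i = 0$, hence $y_i = 0$, which is exactly the condition $\mKW$ requires.

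The main obstacle is implementing this with communication. Alice does not know $x \wedge y$, so she can play her role (the function $a_v$ on $x$) but cannot compute Bob's messages $b_v(x\wedge y)$. I would try to resolve this by simulating Bob's nodes interactively, with Bob learning bits of $x$ as needed. That risks ill-foundedness, so I expect the better route is circuit-level, using the fact that $P$ corresponds to a circuit $C$ for $S$. Here I would replace each negated literal $\overline{x}_i$ of $C$ by a suitable monotone object and show that the modified circuit still separates $S$ from $S^c$.

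Concretely, I would pass to a symmetric game and combine the two monotone reductions. Alice holds $x \in S$ and Bob holds $y \in S^c$. They run $P$ on the pair $(x, \, ?)$ by letting Bob act with $y$ at his nodes, which is legal because $P$ solves $\KW_{S,S^c}$ for every pair in $S \times S^c$. Every leaf reached then outputs an $i$ with $x_i \ne y_i$; let $R_v = A_v \times B_v$ be the rectangles at negative leaves. At a negative leaf we have $x_i=0$ and $y_i=1$ on $R_v$. Now consider $x' = x$ with bit $i$ raised to $1$ and $y' = y$ with bit $i$ lowered to $0$. By monotonicity $x' \in S$ and $y' \in S^c$, and the pair $(x',y')$ must be handled elsewhere.

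To make this well-founded, I would prove by transfinite induction on the rank of each node $v$ the following claim. The monotone closures $\uparrow A_v$ and $\downarrow B_v$ are separated by a monotone circuit $M_v$; equivalently, $\mKW_{\uparrow A_v,\downarrow B_v}$ has a protocol. At Alice's and Bob's nodes I would take unions and intersections respectively: $\uparrow\bigcup_j A_{v_j} = \bigcup_j \uparrow A_{v_j}$, so an OR of the $M_{v_j}$ separates, and dually an AND at Bob's nodes. At a leaf, the claim requires $x_i=1$ on $\uparrow A_v$, which needs $x_i = 1$ on $A_v$. For negative leaves this fails, so it must be arranged from the start by replacing $A, B$ with $S$ and $S^c$, which are already up- and down-closed. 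Once we work with closed sets at the leaves, a negative leaf would force $\uparrow A_v \cap \downarrow B_v \ne \emptyset$. I expect verifying that this intersection is empty at each leaf, and hence that negative leaves can be pruned or relabeled, to be the crux of the argument.
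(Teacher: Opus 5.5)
There is a genuine gap. Your reduction to building a protocol for $\mKW_{S,S^c}$ is correct. So is your observation that running $P$ on $(x, x\wedge y)$ would only produce valid $\mKW$ answers. But, as you note, Bob cannot evaluate $b_v(x\wedge y)$, and nothing later in the proposal repairs this.

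Your final transfinite induction goes through at Alice's nodes, Bob's nodes and positive leaves, but that part is just Theorem~\ref{monotone-kw-equivalence} again. All of the difficulty sits at the negative leaves, and there the plan fails:
\begin{itemize}
\item The intersection you call the crux is empty automatically. Since $S$ is up-closed and $S^c$ is down-closed, $\uparrow A_v \subseteq S$ and $\downarrow B_v \subseteq S^c$, so $\uparrow A_v \cap \downarrow B_v = \emptyset$ at \emph{every} node. A negative leaf does not force this intersection to be nonempty, and its emptiness gives you no way to prune or relabel anything.
\item Negative leaves cannot be pruned, because actual inputs reach them.
\item In general they cannot be relabeled either. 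Take $S = \{x : x_1 = 1 \text{ or } x_2 = 1\}$ and a protocol that first exchanges $x_0$ and $y_0$ and outputs $0$ when $x_0 = 0$ and $y_0 = 1$. At that leaf, $A_v$ contains $0100\cdots$ and $0010\cdots$, so no single index has $x_i = 1$ throughout $A_v$.
\item In that same example, $\uparrow A_v = S$ and $\downarrow B_v = S^c$. So the obligation your induction places on this leaf is the original problem itself.
\end{itemize}
Replacing $A,B$ by $S,S^c$ at the root does not make the leaf rectangles up- or down-closed, so the induction is circular exactly where it matters.

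The missing idea is to stop repairing a protocol for $\KW_{S,S^c}$ and instead build a fresh protocol whose well-foundedness is proved globally. The paper proves the stronger claim that any two analytic sets $A, A'$ separable by a monotone function are separable by a monotone circuit. You would apply it to $S$ and $S^c$, which are Borel and hence analytic. The construction runs as follows:
\begin{itemize}
\item Take nondeterministic CNFs $C(x,y)$ for $A$ and $C'(x',y')$ for $A'$ from Theorem~\ref{depth-2}.
\item Run the literal-sharing protocol from the proof of Theorem~\ref{analytic-separation}, with one stopping condition only: Alice has announced $x_i$ and Bob has announced $\overline{x}_i'$, in which case they output $i$.
\item Every output is then automatically a valid $\mKW$ answer.
\item Suppose some branch were infinite. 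The literals announced along it are consistent and hit every clause of both CNFs.
\item Set $\tilde{x}_i = 1$ exactly when $x_i$ was announced, and $\tilde{x}'_i = 0$ exactly when $\overline{x}'_i$ was announced. Together with suitable $\tilde{y},\tilde{y}'$, this gives $\tilde{x} \le \tilde{x}'$ with $\tilde{x} \in A$ and $\tilde{x}' \in A'$, contradicting monotone separability.
\end{itemize}
Theorem~\ref{monotone-kw-equivalence} then finishes the proof. Local surgery on $P$, or on a circuit for $S$, has no mechanism for controlling the negative leaves; this global consistency argument is what replaces it.
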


\begin{proof}

We prove the stronger claim that any two analytic sets $A,A' \subseteq 2^\NN$ separable by a monotone function are separable by a monotone circuit. By Theorem~\ref{monotone-kw-equivalence}, it suffices to find a communication protocol for $\mKW_{A,A'}$. We use the same game as in the proof of Theorem~\ref{analytic-separation}, except that Alice and Bob keep playing even if they share both the literals $\overline{x}_i$ and $x_i'$ for some $i \in \NN$ (i.e.\ the only stopping condition is sharing both $x_i$ and $\overline{x}_i'$).

We again see that this protocol always outputs a correct value, and to prove well-foundedness, suppose for the sake of contradiction that the protocol has an infinite branch. A similar argument to that in the proof of Theorem~\ref{analytic-separation} lets us choose $\tilde{x},\tilde{x}',\tilde{y},\tilde{y}' \in 2^\NN$ such that $C(\tilde{x},\tilde{y}) = C'(\tilde{x}',\tilde{y}') = 1$ and $\tilde{x} \leq \tilde{x}'$. As $\tilde{x} \in A$ and $\tilde{x}' \in A'$, this contradicts the monotone-separability of $A$ and $A'$.
\end{proof}

\section{Impossibility results using KW games}
\label{sec:impossibility}

While the previous section focuses on positive results about Borel sets, this section focuses on negative results. We first discuss why many techniques from finite communication complexity are insufficient to prove that a given KW game has no communication protocol. Nonetheless, we will prove that the KW games for infinite parity and ill-founded trees lack protocols and thereby present new, combinatorial proofs that these sets are not Borel.

\subsection{Monochromatic partitions for KW games}
\label{sec:partitions}

Most lower bounds in finite communication complexity involve proving lower bounds on the number of $f$-monochromatic rectangles needed to partition a domain. We saw in Corollary~\ref{eq-lower-bound} and Remark~\ref{rem:disj-lower-bound} that partition number lower bounds are useful in the infinite setting as well.

Aho, Ullman, and Yannakakis' results in \cite{auy} show that, for a boolean-valued function $f$ with partition number $\chi(f)$, we have $D(f) \leq O\!\left((\log_2\chi(f))^2\right)$. Yannakakis extended these ideas in \cite{yannakakis1991} to show that the same bound holds for all functions $f$, even those which are not boolean-valued. This means that, up to a quadratic loss, proving lower bounds on communication complexity in the finite setting amounts to proving lower bounds on partition numbers. Indeed, a wide array of lower bound techniques in communication complexity follow this approach.

One might expect that a similar result holds in the infinite setting. We proved that any communication protocol for $f$ gives us a countable partition of $2^\NN \times 2^\NN$ into $f$-monochromatic rectangles. Unfortunately, the converse fails dramatically. In particular, the existence of non-Borel sets tells us that there are KW games without communication protocols. Despite this fact, every KW game can be solved by a function $f$ admitting a countable $f$-monochromatic partition of $2^\NN \times 2^\NN$:

\begin{theorem}
\label{kw-partition}

If $A,B \subseteq 2^\NN$ are disjoint, then there is a function $f \colon 2^\NN \times 2^\NN \rightarrow \NN$ solving $\KW_{A,B}$ such that $2^\NN \times 2^\NN$ has a countable partition into $f$-monochromatic rectangles.

\end{theorem}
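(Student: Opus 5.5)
The plan is to take $f$ to be the \emph{first disagreement index}, extended arbitrarily off $A \times B$. Define $f(x,y) = \min\{i : x_i \neq y_i\}$ when $x \neq y$, and $f(x,x) = 0$ on the diagonal. Since $A \cap B = \emptyset$, every $(x,y) \in A \times B$ has $x \neq y$. So $f(x,y)$ is a valid answer there, and $f$ solves $\KW_{A,B}$ (the values on $(2^\NN \times 2^\NN) \setminus (A \times B)$ are irrelevant). Note that $f$ only sees the first disagreement, so it ignores $A$ and $B$ beyond their disjointness. This is exactly the point of the theorem: the partition obstruction cannot distinguish Borel from non-Borel pairs.

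The main step is to build a countable partition of $2^\NN \times 2^\NN$ into $f$-monochromatic rectangles. For each $w \in 2^{<\NN}$ with $|w| = i$ and each bit $b \in \{0,1\}$, take the rectangle
\begin{equation*}
    R_{w,b} = w b 2^\NN \times w \overline{b} 2^\NN \text{.}
\end{equation*}
On $R_{w,b}$ we have $f \equiv i$: both strings agree on the prefix $w$ and differ at position $i$. There are countably many such rectangles, and they are pairwise disjoint. Any pair $(x,y)$ with $x \neq y$ lies in exactly one of them, namely the one with $w$ the common prefix of length $f(x,y)$ and $b = x_{f(x,y)}$.

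The remaining issue, and the only real obstacle, is the diagonal $\Delta = \{(x,x)\}$. By the argument of Corollary~\ref{eq-lower-bound}, $\Delta$ cannot be covered by countably many rectangles that avoid the off-diagonal, since a rectangle containing $(x,x)$ and $(x',x')$ with $x \neq x'$ also contains $(x,x')$. I handle this through the freedom to define $f$ off $A \times B$, rather than through the choice $f(x,x) = 0$ above. I redefine $f$ so that the diagonal is absorbed into a single big rectangle.

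Concretely, set $f \equiv 0$ on the rectangle $R_* = B^c \times B$ and on $A \times A^c$ as well; disjointness of $A$ and $B$ will be used to check these pieces are consistent. Since we are free off $A \times B$, the cleanest version is the following. Let
\begin{equation*}
    f(x,y) = \begin{cases} \min\{i : x_i \neq y_i\} & \text{if } x \in A,\, y \in B, \\ 0 & \text{otherwise.} \end{cases}
\end{equation*}
The rectangles are then $R_{w,b} \cap (A \times B)$, together with $A^c \times 2^\NN$ and $A \times B^c$. These last two are rectangles on which $f \equiv 0$, and they cover the complement of $A \times B$, including the diagonal. The intersections $R_{w,b} \cap (A\times B) = (wb2^\NN \cap A) \times (w\overline{b}2^\NN \cap B)$ are still rectangles on which $f$ is constant. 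Because $A\cap B=\emptyset$, the sets $R_{w,b}\cap(A\times B)$ already cover $A \times B$. So the whole family is a countable partition into $f$-monochromatic rectangles (discarding empty pieces), which completes the proof.
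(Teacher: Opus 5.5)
Your final construction is correct and is essentially the paper's proof. It uses the same $f$ (the first disagreement index on $A\times B$, and $0$ elsewhere) and the same rectangles $(A\cap wb2^\NN)\times(B\cap w\overline{b}2^\NN)$; the only difference is that you merge the paper's $A^c\times B$ and $A^c\times B^c$ into $A^c\times 2^\NN$. You should delete the abandoned intermediate suggestion to set $f\equiv 0$ on $B^c\times B$, since that rectangle contains $A\times B$ and would contradict the first-disagreement values there.
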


\begin{proof}

We define $f(x,y) = 0$ when $x \notin A$ or $y \notin B$, and we set $f(x,y)$ to be the first index where $x$ and $y$ differ if $x \in A$ and $y \in B$. Then $2^\NN \times 2^\NN$ is partitioned by the rectangles $A^c \times B$, $A \times B^c$, $A^c \times B^c$, and
\begin{equation*}
    (A \cap wb2^\NN) \times (B \cap w\overline{b}2^\NN)
\end{equation*}
for $w \in 2^{<\NN}$ and $b \in \{0,1\}$.
\end{proof}

\begin{remark}

This countable partition corresponds exactly to the always-terminating procedure for every KW game discussed in Remark~\ref{rem:always-terminating}.

\end{remark}

\subsection{The Borel cut-and-choose game}
\label{sec:bcg}

The previous section tells us that, in order to obtain impossibility results using KW games, our techniques must go beyond counting monochromatic rectangles. In other words, we must take advantage of the fact that protocols are well-founded, not just always-terminating. We will do so using the following infinite adversarial game.

\begin{definition}

The \emph{Borel cut-and-choose game} $\BCG_{A,B}$ takes as parameters two subsets $A$ and $B$ of $2^\NN$. It is played between Player I (Cut) and Player II (Choose) as follows:
\begin{itemize}
    \item We begin with $A^{(0)} = A$ and $B^{(0)} = B$.
    \item In the $n$-th round, Player I has the option to either partition $A^{(n)}$ or $B^{(n)}$ into countably many pieces $A^{(n)}_0, A^{(n)}_1, \dots$ or $B^{(n)}_0, B^{(n)}_1, \dots$.
    \begin{itemize}
        \item If Player I partitioned $A$, Player II chooses some $j \in \NN$ and we take $A^{(n+1)} = A^{(n)}_j$ and $B^{(n+1)} = B^{(n)}$.
        \item If Player I partitioned $B$, Player II chooses some $k \in \NN$ and we take $A^{(n+1)} = A^{(n)}$ and $B^{(n+1)} = B^{(n)}_k$.
    \end{itemize}
    \item If there exist $i,n \in \NN$ such that $x_i \neq y_i$ for all $(x,y) \in A^{(n)} \times B^{(n)}$, then Player I wins. Otherwise, Player II wins.
\end{itemize}

\end{definition}

The name is chosen based on the resemblance of this game to those popularized by Jech in \cite{jech1984}. Borel cut-and-choose games are open and therefore determined. These games characterize Borel separability via the following theorem:

\begin{theorem}
\label{bcg}

Two sets $A,B \subseteq 2^\NN$ are Borel separable if and only if Player I has a winning strategy in the game $\BCG_{A,B}$.

\end{theorem}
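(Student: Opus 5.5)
The plan is to derive both directions of Theorem~\ref{bcg} from Theorem~\ref{borel-kw-equivalence}, so it suffices to show that Player I has a winning strategy in $\BCG_{A,B}$ if and only if $\KW_{A,B}$ has a communication protocol.

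For the forward direction, suppose $P$ is a protocol solving $\KW_{A,B}$. Player I follows $P$ from the root. At a node $v$ belonging to Alice, he partitions the current $A^{(n)}$ into the pieces $A^{(n)} \cap a_v^{-1}(\{m\})$ for $m \in \NN$. At a node belonging to Bob, he partitions $B^{(n)}$ in the same way using $b_v$. When Player II chooses a piece, Player I moves to the corresponding child; if the chosen piece is empty, Player I wins at once, vacuously. By induction, $A^{(n)} \times B^{(n)}$ is contained in the rectangle $R_v$ of inputs reaching the current node $v$. Since $P$ is well-founded, the play reaches a leaf labeled $i$ after finitely many rounds. By correctness of $P$, $x_i \neq y_i$ on all of $R_v \cap (A \times B) \supseteq A^{(n)} \times B^{(n)}$, so Player I has won at a finite stage.

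For the reverse direction, fix a winning strategy $\sigma$ for Player I and build a protocol from the tree of plays consistent with $\sigma$.
\begin{itemize}
\item A node is a finite sequence of choices by Player II, arranged as follows.
\item If the position already satisfies the winning condition with some index $i$, the node is a leaf labeled $i$.
\item Otherwise, if $\sigma$ partitions $A^{(n)}$ into pieces $A^{(n)}_j$, the node is Alice's, with $a_v(x) = j$ for the unique $j$ with $x \in A^{(n)}_j$. Bob's nodes are defined symmetrically.
\end{itemize}
Inputs outside $A$ or $B$ are irrelevant to solving $\KW_{A,B}$, so the functions can be extended arbitrarily, say by $0$, off $A^{(n)}$ and $B^{(n)}$. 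This tree is countable because it is a subtree of $\NN^{<\NN}$. By induction, an input $(x,y) \in A \times B$ reaching node $v$ lies in $A^{(n)} \times B^{(n)}$ for that position, so every leaf output is correct.

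The main obstacle is well-foundedness, and it is handled as follows. An infinite branch is a sequence of Player II choices producing a play consistent with $\sigma$ in which the winning condition never holds at any finite stage, because we stop as soon as it does. Such a play is a loss for Player I, contradicting that $\sigma$ is winning. Some care is needed on two points. First, the winning condition is an open condition decided at a finite stage, which is why stopping at the first winning stage is legitimate. Second, Player II may choose empty pieces; these give leaves, since the condition then holds vacuously for any $i$. With well-foundedness established, Theorem~\ref{borel-kw-equivalence} completes the proof.
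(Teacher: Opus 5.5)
Your proof is correct. The forward direction is the paper's argument: Player I walks down a protocol for $\KW_{A,B}$. Your reverse direction takes a different route.

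The paper never builds a protocol in the reverse direction. It proves the contrapositive by giving Player II an explicit strategy: keep $A^{(n)}$ and $B^{(n)}$ Borel inseparable. This is always possible, because if each piece $S_j$ were separated from $T$ by a Borel set $D_j$, then $\bigcup_j D_j$ would separate $\bigcup_j S_j$ from $T$. An inseparable pair never meets the winning condition, since that would make it separable by $\emptyset$, by $2^\NN$, or by a clopen set $\{x : x_i = b\}$.

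You instead read a protocol off Player I's winning strategy, namely the tree of $\sigma$-consistent positions cut off at the first winning stage. You get well-foundedness from $\sigma$ being winning, and then invoke the protocol-to-circuit half of Theorem~\ref{borel-kw-equivalence}.

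The paper's route is shorter and avoids KW games in that direction. It also produces a winning strategy for Player II whenever $A$ and $B$ are inseparable, so together with the forward direction it proves determinacy of $\BCG_{A,B}$ directly rather than relying on open determinacy. Yours, in the inseparable case, only shows that Player I has no winning strategy.

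In exchange, your argument shows that winning strategies for Player I and protocols for $\KW_{A,B}$ are interconvertible, with the underlying tree essentially preserved. This makes the equivalence with $\KW_{A,B}$ explicit. It also lets one transfer rank information between the game, $D(\KW_{A,B})$, and (via Corollary~\ref{cost-vs-rank}) the rank of a separating Borel set.
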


\begin{proof}

For the reverse direction, we prove the contrapositive. If $A$ and $B$ are not Borel separable, then Player II can always play so that $A^{(n)}$ and $B^{(n)}$ are not Borel separable for each $n \in \NN$ (this is because if $\bigcup_{j \in \NN} S_j$ and $T$ are not Borel separable, then $S_j$ and $T$ are not Borel separable for some $j \in \NN$). In particular, Player II can ensure that $A^{(n)}$ and $B^{(n)}$ are never separable by a set of the form $\{x \in 2^\NN : x_i = b\}$ for $i \in \NN$ and $b \in \{0,1\}$. Thus Player II has a winning strategy.

For the forward direction, we use KW games. If $A$ and $B$ are Borel separable, then Theorem~\ref{borel-kw-equivalence} gives us a communication protocol $P$ for $\KW_{A,B}$. Recall that for a node $v$ in $P$, we define $R_v = A_v \times B_v$ to be the set of pairs $(x,y) \in A \times B$ such that the protocol reaches $v$ on input $(x,y)$. Player I's strategy is to always ensure that $A^{(n)}$ and $B^{(n)}$ correspond to the sets $A_{v^{(n)}}$ and $B_{v^{(n)}}$ for some node $v^{(n)} \in P$ which is a distance of $n$ from the root node. In particular, if $v^{(n)}$ is one of Alice's nodes with children $v_0, v_1,\dots$, Player I partitions $A^{(n)} = A_{v^{(n)}}$ into the sets $A_{v_0}, A_{v_1},\dots$. If, on the other hand, $v^{(n)}$ is one of Bob's nodes with children $v_0, v_1,\dots$, Player I partitions $B^{(n)} = B_{v^{(n)}}$ into the sets $B_{v_0}, B_{v_1},\dots$. We eventually reach a leaf node by the well-foundedness of $P$, at which point Player I wins.
\end{proof}

We can also formulate a monotone version of Borel cut-and-choose games:

\begin{definition}

The \emph{monotone Borel cut-and-choose game} $\mBCG_{A,B}$ is defined identically to the non-monotone version, except that the winning condition is changed: Player I wins if there exist $i,n \in \NN$ such that $x_i = 1$ for all $x \in A^{(n)}$ and $y_i = 0$ for all $y \in B^{(n)}$, and Player II wins otherwise.

\end{definition}

\begin{theorem}
\label{monotone-bcg}

Two sets $A,B \subseteq 2^\NN$ are separable by a monotone Borel set if and only if Player I has a winning strategy in the game $\mBCG_{A,B}$.

\end{theorem}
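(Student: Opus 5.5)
We follow the proof of Theorem~\ref{bcg} step for step, replacing Theorem~\ref{borel-kw-equivalence} with its monotone analog Theorem~\ref{monotone-kw-equivalence}. We read ``separable by a monotone Borel set'' as ``separable by a monotone circuit''. By Corollary~\ref{circuit-borel}, a monotone circuit computes a Borel set that is monotone. Conversely, given a monotone Borel set $S$ separating $A$ and $B$, Theorem~\ref{monotone-borel} supplies a monotone circuit computing $S$. So both directions reduce to: Player I wins $\mBCG_{A,B}$ if and only if $A$ and $B$ are separable by a monotone circuit.

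\textbf{Forward direction.} Suppose $A$ and $B$ are separable by a monotone circuit. Theorem~\ref{monotone-kw-equivalence} gives a protocol $P$ for $\mKW_{A,B}$. Player I uses the same strategy as in Theorem~\ref{bcg}: he maintains $A^{(n)} = A_{v^{(n)}}$ and $B^{(n)} = B_{v^{(n)}}$ for a node $v^{(n)}$ of $P$ at depth $n$. At an Alice node he partitions $A^{(n)}$ according to the children of $v^{(n)}$, and at a Bob node he partitions $B^{(n)}$ likewise.

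Two details need care. First, a child $v_j$ may have empty $A_{v_j}$. Player II may pick such an empty piece, but then $A^{(n+1)} \times B^{(n+1)} = \emptyset$ and the winning condition holds vacuously. Second, by well-foundedness of $P$ the play reaches a leaf labeled $i$. Correctness of $P$ for $\mKW_{A,B}$ gives $x_i = 1$ and $y_i = 0$ for all $(x,y) \in A_v \times B_v$. This is exactly the winning condition of $\mBCG$, so Player I wins.

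\textbf{Reverse direction (contrapositive).} Call $(S,T)$ \emph{monotone-inseparable} if no monotone circuit separates them. The key closure fact is: if $\bigcup_j S_j$ and $T$ are monotone-inseparable, then some $S_j$ and $T$ are monotone-inseparable. To see this, take monotone circuits $C_j$ separating each $S_j$ from $T$; then $\bigvee_j C_j$ is a monotone circuit separating $\bigcup_j S_j$ from $T$. Dually, if $S$ and $\bigcup_k T_k$ are monotone-inseparable, then $S$ and some $T_k$ are, since circuits $C_k$ separating $S$ from each $T_k$ give a separating circuit $\bigwedge_k C_k$.

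Player II therefore always picks a piece preserving monotone-inseparability. If Player I ever won at stage $n$ with index $i$, the single-literal monotone circuit $x_i$ would separate $A^{(n)}$ from $B^{(n)}$. Hence Player II wins.

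The only real subtlety is the notion of monotone Borel set. The case split above handles it by passing through Theorem~\ref{monotone-borel} and Corollary~\ref{circuit-borel}. Alternatively, one can simply note that separability by a monotone circuit is the operative notion throughout.
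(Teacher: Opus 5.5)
Your plan is the paper's: the forward direction passes through Theorem~\ref{monotone-borel} and Theorem~\ref{monotone-kw-equivalence} and reuses Player~I's protocol-following strategy from Theorem~\ref{bcg}, and the reverse direction has Player~II preserve monotone inseparability. Your $\bigvee_j C_j$ / $\bigwedge_k C_k$ proof of the closure fact, which the paper only asserts, is correct. So is your observation that a win for Player~I would make the literal $x_i$ a separating monotone circuit.

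The step that fails is the empty-piece detail you flagged yourself. Unlike in $\BCG_{A,B}$, the winning condition of $\mBCG_{A,B}$ is not a statement about the product $A^{(n)} \times B^{(n)}$. It demands separately that $x_i = 1$ for all $x \in A^{(n)}$ and that $y_i = 0$ for all $y \in B^{(n)}$. If Player~II keeps an empty piece $A_{v_j}$, only the first half becomes vacuous. Nothing yet gives an $i$ with $B^{(n+1)} \subseteq \{y : y_i = 0\}$. For the same reason, your leaf step (``correctness on $A_v \times B_v$ is exactly the winning condition'') is valid only when $A_v$ and $B_v$ are both nonempty. This cannot be patched in full generality. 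Take $A = \emptyset$ and $B = \{111\cdots\}$: the empty set is a monotone Borel separator, yet Player~II wins $\mBCG_{A,B}$ by always keeping $111\cdots$ in $B^{(n)}$. So the statement needs $A, B \neq \emptyset$ or a product-form winning condition. The paper's ``proceeds identically'' hides the same point.

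For nonempty $A$ and $B$ the repair is short. Player~I follows $P$ as long as both current sets are nonempty. At a leaf with $A_v, B_v \neq \emptyset$, correctness of $P$ really does give both halves of the condition. Suppose Player~II ever keeps an empty piece, say $A^{(m)} = \emptyset$. Player~I then fixes some $x^* \in A$ and partitions $B^{(m)}$ according to the least $i$ with $x^*_i = 1$ and $y_i = 0$. Such an $i$ exists for every $y \in B$, since $y \geq x^*$ would put $y$ in the upward-closed separator. Whatever piece Player~II keeps lies in $\{y : y_i = 0\}$, so Player~I wins at stage $m+1$. The case $B^{(m)} = \emptyset$ is symmetric, using a fixed $y^* \in B$.

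Alternatively, adopt the product-form winning condition. Then your vacuity argument is correct as written. In the reverse direction, the constant literal $0$ or $1$ (allowed in monotone circuits) separates when the product is empty.
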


\begin{proof}

The reverse direction is proved identically to that of Theorem~\ref{bcg}, using the fact that if $\bigcup_{j \in \NN} S_j$ and $T$ are not separable by a monotone Borel set, then $S_j$ and $T$ are not separable by a monotone Borel set for some $j \in \NN$.

For the forward direction, suppose $A$ and $B$ are separable by a monotone Borel set. Then by Theorem~\ref{monotone-borel}, there exists a monotone circuit separating $A$ and $B$. Thus Theorem~\ref{monotone-kw-equivalence} gives us a protocol $P$ for $\mKW_{A,B}$, and the rest of the proof proceeds identically to that of Theorem~\ref{bcg}.
\end{proof}

\begin{remark}

The forward directions of Theorems~\ref{bcg} and \ref{monotone-bcg} give us a technique to prove that two sets $A$ and $B$ are not (monotone) Borel separable: providing a winning strategy for Player II in $\BCG_{A,B}$ or $\mBCG_{A,B}$. We can interpret this strategy as traversing down an arbitrary protocol tree in such a manner that each node $v$ we reach contains for every $i \in \NN$ some pair $(x,y) \in R_v$ with $x_i = y_i$. Since we must eventually reach a leaf node by well-foundedness, the strategy ensures that our arbitrarily-chosen protocol does not solve $\KW_{A,B}$.

\end{remark}

To obtain a winning strategy for Player II, we will define a notion of ``largeness'' so that the strategy can simply be to pick a ``large'' set. For this to work, it must be the case that if $\bigcup_{n \in \NN} S_n$ is large, so is some $S_n$. Baire category will give us one such notion of largeness.

\subsection{Inseparability of everywhere nonmeager sets}
\label{sec:baire}

In this section, we let $\Gamma$ denote either the binary alphabet $\{0,1\}$ or the natural numbers $\NN$. Given a string $v \in \Gamma^{<\NN}$, we use the notation $[v]$ for the basic open set $v\Gamma^\NN$. The following definitions are phrased combinatorially, but they are equivalent to the standard topological formulations.

\begin{definition}

Call a set $A \subseteq \Gamma^\NN$ \emph{nowhere dense} if for every $v \in \Gamma^{<\NN}$, there exists $w \in \Gamma^{<\NN}$ such that $[vw] \cap A = \emptyset$. A set is \emph{meager} if it can be written as countable union of nowhere dense sets. A set is \emph{nonmeager} if it is not meager and \emph{nonmeager in $[v]$} if its intersection with $[v]$ is nonmeager. A set is \emph{everywhere nonmeager in $[v]$} if its intersection with $[vw]$ is nonmeager for every $w \in \Gamma^{<\NN}$.

\end{definition}

Note that if $A$ is everywhere nonmeager in $[v]$, then it is also everywhere nonmeager in $[vw]$ for all $w \in \Gamma^{<\NN}$. The Baire category theorem tells us that $\Gamma^\NN$ is everywhere nonmeager. The following lemma is the key fact about everywhere nonmeager sets which lets us call being everywhere nonmeager a notion of largeness.

\begin{lemma}
\label{everywhere-nonmeager}

If $A \subseteq \Gamma^\NN$ is nonmeager in $[v]$, then there exists $w \in \Gamma^{<\NN}$ such that $A$ is everywhere nonmeager in $[vw]$.

\end{lemma}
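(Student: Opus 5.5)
We argue by contradiction. Suppose $A$ is nonmeager in $[v]$ but for every $w \in \Gamma^{<\NN}$ the set $A$ fails to be everywhere nonmeager in $[vw]$. Unwinding the definition, for every $w$ there is some $u$ with $A \cap [vwu]$ meager. Let $W$ be the set of all finite strings $s$ extending $v$ such that $A \cap [s]$ is meager. The assumption says that every $[vw]$ contains some $[s]$ with $s \in W$. In other words, the open set $U = \bigcup_{s \in W} [s]$ is dense in $[v]$.

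We then show that $A \cap [v]$ is meager, which gives the contradiction. We split $A \cap [v]$ into two pieces:
\begin{equation*}
    A \cap [v] = \bigl(A \cap U\bigr) \cup \bigl(A \cap ([v] \setminus U)\bigr) \text{.}
\end{equation*}
The second piece lies inside $[v] \setminus U$, which is nowhere dense in the combinatorial sense. Indeed, for any string $t$, we first extend $t$ to meet $v$ if $t$ is compatible with $v$. If $t$ is incompatible with $v$, then $[t] \cap [v] = \emptyset$ already. Otherwise, density of $U$ in $[v]$ gives an extension $tu'$ with $[tu'] \subseteq [s]$ for some $s \in W$, and then $[tu']$ misses $[v] \setminus U$. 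So the second piece is meager.

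For the first piece, $A \cap U = \bigcup_{s \in W} (A \cap [s])$. This is a countable union, because $\Gamma^{<\NN}$ is countable whether $\Gamma = \{0,1\}$ or $\Gamma = \NN$. Each set in the union is meager by the choice of $W$. A countable union of countably many nowhere dense sets is again a countable union of nowhere dense sets, so $A \cap U$ is meager. Hence $A \cap [v]$ is meager, contradicting the hypothesis.

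The only real subtlety is the cover step. We use all of $W$ rather than a disjoint subfamily, and nothing more is needed, since meagerness is closed under countable unions. The other thing to check carefully is the handling of strings incompatible with $v$ in the nowhere-denseness argument. Countability of $\Gamma^{<\NN}$ is the essential ingredient.
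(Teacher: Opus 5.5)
Your proof is correct and follows essentially the same route as the paper. The paper fixes one extension $z_w$ for each $w \in \Gamma^{<\NN}$ and works with $\bigcup_{w}[vwz_w]$, whereas you take the full family $W$ of all extensions $s$ of $v$ with $A \cap [s]$ meager. This is a cosmetic difference: both arguments split $A \cap [v]$ into a countable union of meager pieces plus a subset of the nowhere dense set $[v] \setminus U$.
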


\begin{proof}

Suppose not. Then for every $w \in \Gamma^{<\NN}$, there exists $z_w \in \Gamma^{<\NN}$ such that $A \cap [vwz_w]$ is meager. Thus the set $A \cap \bigcup_{w \in \Gamma^{<\NN}}[vwz_w]$ is meager. We also claim that the set $B = [v] \setminus \left(\bigcup_{w \in \Gamma^{<\NN}}[vwz_w]\right)$ is nowhere dense. To this end, fix a string $s \in \Gamma^{<\NN}$. If we can write $s$ in the form $vt$ for some $t \in \Gamma^{<\NN}$, then we have $sz_t = vtz_t \in \bigcup_{w \in \Gamma^{<\NN}}[vwz_w]$ and hence $sz_t \notin B$. Otherwise, we can choose $z \in \Gamma^{<\NN}$ such that $sz \notin [v]$ and hence $sz \notin B$. This proves that $B$ is nowhere dense, and we have in particular that $A \cap B$ is meager. Thus $A \cap [v]$ is meager as the union of the meager sets $A \cap \bigcup_{w \in \Gamma^{<\NN}}[vwz_w]$ and $A \cap B$, contradicting our assumption.
\end{proof}

Now let us see how we can use everywhere nonmeager sets to provide a winning strategy in the Borel cut-and-choose game and prove our first impossibility result:

\begin{theorem}
\label{nonmeager-inseparability}

If $A \subseteq 2^\NN$ is everywhere nonmeager and $B \subseteq 2^\NN$ is nonmeager, then $A$ and $B$ are not Borel separable.

\end{theorem}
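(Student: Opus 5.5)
The plan is to use the forward direction of Theorem~\ref{bcg}: we give a winning strategy for Player II in $\BCG_{A,B}$, which shows that $A$ and $B$ are not Borel separable. Player II will maintain the following invariant. After round $n$ there is a string $v^{(n)} \in 2^{<\NN}$ such that both $A^{(n)}$ and $B^{(n)}$ are everywhere nonmeager in $[v^{(n)}]$.

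\emph{Initialization.} Since $B$ is nonmeager, Lemma~\ref{everywhere-nonmeager} (applied with the empty string) gives $u$ such that $B$ is everywhere nonmeager in $[u]$. Since $A$ is everywhere nonmeager, it is in particular everywhere nonmeager in $[u]$. Set $v^{(0)} = u$.

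\emph{Maintaining the invariant.} Suppose Player I partitions $A^{(n)}$ into pieces $A^{(n)}_0, A^{(n)}_1, \dots$. The set $A^{(n)} \cap [v^{(n)}]$ is nonmeager and is a countable union of the sets $A^{(n)}_j \cap [v^{(n)}]$. A countable union of meager sets is meager, so some $A^{(n)}_j$ is nonmeager in $[v^{(n)}]$; Player II chooses this $j$. Lemma~\ref{everywhere-nonmeager} then yields $w$ such that $A^{(n)}_j$ is everywhere nonmeager in $[v^{(n)}w]$. Set $v^{(n+1)} = v^{(n)}w$. The set $B^{(n+1)} = B^{(n)}$ stays everywhere nonmeager in $[v^{(n+1)}]$, because being everywhere nonmeager in $[v]$ passes to every extension $[vw]$. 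The case where Player I partitions $B^{(n)}$ is symmetric.

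\emph{Player I never wins.} Fix $n$ and $i$, and write $v = v^{(n)}$. We need $x \in A^{(n)}$ and $y \in B^{(n)}$ with $x_i = y_i$.
\begin{itemize}
    \item If $i < |v|$: both $A^{(n)} \cap [v]$ and $B^{(n)} \cap [v]$ are nonmeager, hence nonempty. Any $x$ and $y$ taken from them agree with $v$ at position $i$, so $x_i = y_i$.
    \item If $i \geq |v|$: pick any $y \in B^{(n)} \cap [v]$. Extend $v$ to a string $v'$ of length $i+1$ with $v'_i = y_i$. Since $A^{(n)}$ is everywhere nonmeager in $[v]$, the set $A^{(n)} \cap [v']$ is nonmeager, hence nonempty. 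Any $x$ in it satisfies $x_i = y_i$.
\end{itemize}
So no stage ever separates $A^{(n)}$ and $B^{(n)}$ by a single coordinate, and Player II wins.

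The main obstacle is choosing the right invariant. Requiring only that $B^{(n)}$ be nonmeager is too weak: once Player II refines to $[v^{(n)}w]$ to handle $A$, she could lose control of $B$. Requiring both sets to be everywhere nonmeager on a common basic open set fixes this, since that property survives passing to any extension. The hypothesis that $B$ is nonmeager is needed only once, at the start, to obtain such a common cylinder via Lemma~\ref{everywhere-nonmeager}.
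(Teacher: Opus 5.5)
Your proof is correct and follows essentially the same route as the paper: Player II wins $\BCG_{A,B}$ by keeping both $A^{(n)}$ and $B^{(n)}$ everywhere nonmeager in a common cylinder $[v^{(n)}]$, using Lemma~\ref{everywhere-nonmeager} once for $B$ at the start and again after each cut. Your final step, which extends $v$ to a string of length $i+1$ when $i \geq |v|$, is slightly more careful than the paper's, since the paper extends only to length at least $i$ and so does not quite fix coordinate $i$.
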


\begin{proof}

By Theorem~\ref{bcg}, it suffices to exhibit a winning strategy for Player II in the game $\BCG_{A,B}$. We claim that Player II can win by always maintaining the invariant that, for all $n \in \NN$, the sets $A^{(n)}$ and $B^{(n)}$ are everywhere nonmeager in $[w^{(n)}]$ for some $w^{(n)} \in 2^{<\NN}$.

Lemma~\ref{everywhere-nonmeager} lets us choose $w^{(0)} \in 2^{<\NN}$ such that $B$ is everywhere nonmeager in $[w^{(0)}]$, so the invariant initially holds. We now show inductively that Player II's invariant can always be maintained, so suppose $A^{(n)}$ and $B^{(n)}$ are everywhere nonmeager in $[w^{(n)}]$. If $A^{(n)} = \bigcup_{j \in \NN} A^{(n)}_j$, then some $A^{(n)}_j$ must be nonmeager in $[w^{(n)}]$. Thus Lemma~\ref{everywhere-nonmeager} gives us a string $z \in 2^{<\NN}$ with $A^{(n)}_j$ everywhere nonmeager in $[w^{(n)}z]$. Player II chooses this $j$, and the invariant holds for $w^{(n+1)} = w^{(n)}z$. A symmetric argument tells us that this is also possible when Player I chooses to partition $B^{(n)}$. This completes the induction.

Finally, fix $i,n \in \NN$. We know that $A^{(n)}$ and $B^{(n)}$ are everywhere nonmeager in $[w^{(n)}]$ for some $w^{(n)} \in 2^{<\NN}$, and we extend $w^{(n)}$ to some string $w$ of length at least $i$. As $A^{(n)}$ and $B^{(n)}$ are everywhere nonmeager in $[w]$, these sets both have nonempty intersection with $[w]$. Thus there exist strings $x \in A^{(n)}$ and $y \in B^{(n)}$ with $x_i = y_i$, so the winning condition for Player I does not hold. As $i,n \in \NN$ were arbitrary, this strategy is winning for Player II.
\end{proof}

Theorem~\ref{nonmeager-inseparability} can also be proven using the fact that Borel sets are Baire-measurable, so the Borel cut-and-choose game is no more powerful than standard topological arguments when one only uses everywhere nonmeager sets as an inductive invariant. There are, however, other notions of largeness one can use. Our proof in the next section that the set of ill-founded trees is not Borel uses such an alternative notion, and the result is beyond the reach of standard topological arguments in descriptive set theory.

\subsubsection{Hardness of parity}

A function $f \colon 2^\NN \rightarrow \{0,1\}$ is a \emph{parity function} if $f(x) \neq f(y)$ whenever $x$ and $y$ differ in a single bit. One consequence of Theorem~\ref{nonmeager-inseparability} is proving that no such function is Borel. The only fact we will need about parity functions is the following Baire-categorical property:

\begin{lemma}
\label{parity-nonmeagerness}

If $f \colon 2^\NN \rightarrow \{0,1\}$ is a parity function, then the sets $f^{-1}(\{1\})$ and $f^{-1}(\{0\})$ are everywhere nonmeager in $2^\NN$.

\end{lemma}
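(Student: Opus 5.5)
The approach is to exploit the symmetry of flipping single bits. For each $i \in \NN$, let $\sigma_i \colon 2^\NN \rightarrow 2^\NN$ flip the $i$-th bit. Write $E_1 = f^{-1}(\{1\})$ and $E_0 = f^{-1}(\{0\})$. The parity property says exactly that $\sigma_i(E_1) = E_0$ and $\sigma_i(E_0) = E_1$ for every $i$. Each $\sigma_i$ is a homeomorphism that maps basic open sets to basic open sets, so it preserves nowhere density, meagerness, and everywhere nonmeagerness. I will argue by contradiction: assume $E_1$ is not everywhere nonmeager, so there is some $v \in 2^{<\NN}$ with $E_1 \cap [v]$ meager.

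The first step is to transport this meagerness. Let $F$ be any finite set of indices, and write $\sigma_F$ for the composition of the $\sigma_i$ with $i \in F$; it flips exactly the bits in $F$. Applying $\sigma_F$ to $E_1 \cap [v]$ gives $E_{|F| \bmod 2} \cap [\sigma_F(v)]$, where $\sigma_F(v)$ is $v$ with the coordinates in $F$ below $|v|$ flipped. This set is meager. Now take $F = \{i\}$ for some $i \geq |v|$. Flipping bit $i$ does not move $[v]$, so $E_0 \cap [v] = \sigma_i(E_1 \cap [v])$ is meager too. Then $[v] = (E_0 \cap [v]) \cup (E_1 \cap [v])$ is meager. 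This contradicts the Baire category theorem, which says $2^\NN$ is everywhere nonmeager, so $[v]$ is nonmeager.

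To keep the argument combinatorial, I will check directly that $\sigma_i$ preserves nowhere density. Suppose $N$ is nowhere dense and $s \in 2^{<\NN}$ is given. Apply nowhere density of $N$ to $\sigma_i(s)$, padding $s$ first so that its length exceeds $i$. This gives an extension $w$ with $[\sigma_i(s)w] \cap N = \emptyset$. Applying $\sigma_i$ yields $[sw] \cap \sigma_i(N) = \emptyset$. Preservation of meagerness follows by taking countable unions. The symmetric argument, or simply applying $\sigma_0$, handles $E_0$.

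I do not expect a real obstacle. The only step that needs care is this bookkeeping: that flipping a bit beyond $|v|$ fixes $[v]$, and that $\sigma_i$ preserves meagerness in the combinatorial sense defined in the paper.
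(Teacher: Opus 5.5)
Your proof is correct and is essentially the paper's argument: the paper also combines the Baire category theorem with the observation that $f^{-1}(\{1\}) \cap [w]$ is meager if and only if $f^{-1}(\{0\}) \cap [w]$ is meager, and your flip $\sigma_i$ with $i \geq |v|$ (together with your check that $\sigma_i$ preserves nowhere density) supplies exactly the justification the paper leaves implicit. One harmless slip: in your general $\sigma_F$ paragraph the image should be $E_{(|F|+1) \bmod 2} \cap [\sigma_F(v)]$ rather than $E_{|F| \bmod 2} \cap [\sigma_F(v)]$, but you only use $F = \{i\}$, where you correctly obtain $E_0 \cap [v]$.
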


This lemma follows from combining the Baire category theorem with the observation that, for any $w \in 2^{<\NN}$, the set $f^{-1}(\{1\}) \cap [w]$ is meager if and only if the set $f^{-1}(\{0\}) \cap [w]$ is meager. The lemma lets us apply Theorem~\ref{nonmeager-inseparability} to obtain our desired result:

\begin{corollary}
\label{parity}

No infinite parity function is Borel.

\end{corollary}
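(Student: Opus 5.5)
The plan is to argue by contradiction and reduce directly to Theorem~\ref{nonmeager-inseparability}. Suppose $f \colon 2^\NN \rightarrow \{0,1\}$ is a parity function and that $f$ is Borel, meaning that $S = f^{-1}(\{1\})$ is a Borel set. Then $S$ is itself a Borel set separating $A = f^{-1}(\{1\})$ and $B = f^{-1}(\{0\})$, since $A \subseteq S$ and $S \cap B = \emptyset$. The contradiction will come from showing that these two sets are not Borel separable.

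First we need the hypotheses of Theorem~\ref{nonmeager-inseparability}. By Lemma~\ref{parity-nonmeagerness}, both $A$ and $B$ are everywhere nonmeager in $2^\NN$. In particular $A$ is everywhere nonmeager. Taking the empty string $w$, the intersection $B \cap [w] = B$ is nonmeager, so $B$ is nonmeager. Theorem~\ref{nonmeager-inseparability} then says that $A$ and $B$ are not Borel separable, which is the desired contradiction. Equivalently, via Corollary~\ref{borel-kw-corollary}, the game $\KW_{S,S^c}$ has no protocol.

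If Lemma~\ref{parity-nonmeagerness} needs justification, the only nontrivial step is the observation stated after it. Let $\sigma$ be the homeomorphism of $2^\NN$ that flips bit $m$, where $m \geq |w|$. Then $\sigma$ preserves $[w]$, preserves meagerness, and by the parity property maps $f^{-1}(\{1\}) \cap [w]$ onto $f^{-1}(\{0\}) \cap [w]$. Hence one of these sets is meager if and only if the other is. Their union is $[w]$, which is nonmeager by the Baire category theorem, so neither can be meager. Preservation of nowhere density under $\sigma$ is immediate from the combinatorial definition, because $\sigma$ maps basic open sets $[u]$ to basic open sets of the same length.

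This part, transferring meagerness under the bit flip, is the only place where any care is needed. The rest is a direct application of earlier results.
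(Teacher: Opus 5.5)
Your proof is correct and follows the paper's route exactly: Lemma~\ref{parity-nonmeagerness} shows that $f^{-1}(\{1\})$ is everywhere nonmeager and $f^{-1}(\{0\})$ is nonmeager, and Theorem~\ref{nonmeager-inseparability} then rules out any Borel separator, in particular $f^{-1}(\{1\})$ itself. Your bit-flip homeomorphism, which flips a coordinate $m \geq |w|$, is the natural justification for the observation the paper states without proof, namely that $f^{-1}(\{1\}) \cap [w]$ is meager if and only if $f^{-1}(\{0\}) \cap [w]$ is meager.
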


\subsection{Hardness of ill-founded trees}

To prove that the set of ill-founded trees is not Borel, we need notions of largeness for ill-founded and well-founded trees. For ill-founded trees, we use a variant of being everywhere nonmeager. Let $\NN^\NN$ denote the Baire space, and recall that trees are represented as prefix-closed subsets of $\NN^{<\NN}$.

\begin{definition}

For a collection $A$ of ill-founded trees, write $X_A \subseteq \NN^\NN$ to denote the set of strings $x$ such that $A$ contains the tree consisting precisely of the finite prefixes of $x$ (in other words, such a tree contains a single infinite branch and no other vertices).

\end{definition}

This definition gives us a notion of largeness for ill-founded trees, since we can ask whether $X_A$ is everywhere nonmeager in $[w]$ for a string $w \in \NN^{<\NN}$. For our notion of largeness for well-founded trees, we use a property defined by Sipser in \cite{sipser1984}:

\begin{definition}

Let $B$ be a collection of well-founded trees and $w \in \NN^{<\NN}$. Given a well-founded tree $T$, we define $wT = \{wx : x \in T\}$. We call $B$ \emph{large at $w$} if for every well-founded tree $T$, we have $S \cap [w] = wT$ for some tree $S \in B$.

\end{definition}

\begin{lemma}
\label{wf-largeness}

If $B = \bigcup_{k \in \NN} B_k$ is large at $w$, then $B_k$ is large at $wk$ for some $k \in \NN$.

\end{lemma}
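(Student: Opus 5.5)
We argue by contradiction. Suppose that for every $k \in \NN$ the collection $B_k$ fails to be large at $wk$. Then for each $k$ we can choose a well-founded tree $T_k$ witnessing this failure. That is, no tree $S \in B_k$ satisfies $S \cap [wk] = wkT_k$. We then glue these witnesses together into a single well-founded tree
\begin{equation*}
    T = \{\emptyset\} \cup \{k t : k \in \NN,\ t \in T_k\} \text{.}
\end{equation*}
This set is prefix-closed: the empty string lies in $T$, every proper prefix of a string $kt$ is either empty or of the form $kt'$ with $t'$ a prefix of $t$ (hence $t' \in T_k$), and each $T_k$ contains the empty string. It is also well-founded, since an infinite branch of $T$ would begin with some fixed first letter $k$, and the rest of it would be an infinite branch of $T_k$.

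Since $B$ is large at $w$, there is a tree $S \in B$ with $S \cap [w] = wT$. Here we read $[w]$ as the set of finite strings extending $w$, which is the only reading under which the definition makes sense for trees. Because $B = \bigcup_k B_k$, we have $S \in B_k$ for some $k$. The key computation is that intersecting with $[wk]$ restricts to the strings extending $wk$:
\begin{equation*}
    S \cap [wk] = (S \cap [w]) \cap [wk] = wT \cap [wk] = \{wkt : t \in T_k\} = wkT_k \text{.}
\end{equation*}
The third equality holds because the elements of $wT$ extending $wk$ are exactly $w$ followed by elements of $T$ beginning with $k$. So $S \in B_k$ satisfies $S \cap [wk] = wkT_k$, which contradicts the choice of $T_k$. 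Hence some $B_k$ is large at $wk$.

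The main point needing care is to use the definition of $[\cdot]$ consistently for sets of finite strings, so that the identity $S \cap [wk] = wkT_k$ is literally correct. We also need that the glued tree $T$ is a genuine well-founded tree and not merely some set of strings. Both checks are routine once the gluing construction is fixed. The choice of the witnesses $T_k$ uses only countable choice.
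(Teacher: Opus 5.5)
Your proof is correct and follows the paper's argument: choose a failure witness $T_k$ for each $k$, glue them into $T = \{\emptyset\} \cup \bigcup_{k} kT_k$, and use largeness of $B$ at $w$ to get some $S \in B_k$ with $S \cap [wk] = wkT_k$, a contradiction. Your identity $S \cap [wk] = wkT_k$ is the correct form of the equation that the paper writes as $wkT$, and your checks that $T$ is prefix-closed and well-founded fill in details the paper leaves implicit.
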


\begin{proof}

Suppose not. Then for every $k \in \NN$, there is a well-founded tree $T_k$ such that $S \cap [wk] \neq wkT_k$ for all $S \in B_k$. The tree $T = \{\emptyset\} \cup \bigcup_{k \in \NN} kT_k$ is well-founded. If we had $S \cap [w] = wT$ for any $S \in B$, then we would have $S \in B_k$ for some $k$ and $S \cap [wk] = wkT$, contradicting the construction of $T_k$. Thus $S \cap [w] \neq wT$ for any $S \in B$. The fact that $B$ is large at $w$ then gives us our desired contradiction.
\end{proof}

Note also that if $B$ is large at $w$, then $B$ is also large at $wz$ for all $z \in \NN^{<\NN}$. With our two notions of largeness in hand, we are now ready to prove the main claim. We identify trees with elements of $2^\NN$ by fixing a bijection $\varphi \colon \NN \rightarrow \NN^{<\NN}$ and mapping a tree $T$ to the string $x \in 2^\NN$ with $x_i = 1 \iff \varphi(i) \in T$.

\begin{theorem}
\label{ill-founded-trees}

The set of ill-founded trees is not Borel.

\end{theorem}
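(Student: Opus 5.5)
We apply Theorem~\ref{bcg} with $A$ the set of ill-founded trees and $B$ the set of well-founded trees. Under the identification with $2^\NN$ these sets are complementary, so it suffices to give a winning strategy for Player II in $\BCG_{A,B}$. Player II will maintain the following invariant: after round $n$ there is a string $w^{(n)} \in \NN^{<\NN}$ such that $X_{A^{(n)}}$ is everywhere nonmeager in $[w^{(n)}]$ (as a subset of $\NN^\NN$) and $B^{(n)}$ is large at $w^{(n)}$.

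To start, note that $X_A = \NN^\NN$, which is everywhere nonmeager by the Baire category theorem. Also, $B$ is large at the empty string, since every well-founded tree lies in $B$. So $w^{(0)} = \emptyset$ works.

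For the inductive step, suppose first that Player I partitions $A^{(n)} = \bigcup_j A^{(n)}_j$. Then $X_{A^{(n)}} = \bigcup_j X_{A^{(n)}_j}$, so some $X_{A^{(n)}_j}$ is nonmeager in $[w^{(n)}]$. Lemma~\ref{everywhere-nonmeager} (with $\Gamma = \NN$) gives $z$ such that $X_{A^{(n)}_j}$ is everywhere nonmeager in $[w^{(n)}z]$. Largeness of $B^{(n)}$ is inherited at $w^{(n)}z$, so Player II chooses this $j$ and sets $w^{(n+1)} = w^{(n)}z$.

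Now suppose Player I partitions $B^{(n)} = \bigcup_k B^{(n)}_k$. Lemma~\ref{wf-largeness} gives $k$ with $B^{(n)}_k$ large at $w^{(n)}k$. Since everywhere nonmeagerness in $[w^{(n)}]$ passes to $[w^{(n)}k]$, Player II chooses this $k$ and sets $w^{(n+1)} = w^{(n)}k$. In either case the invariant is preserved.

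It remains to show that Player I never wins. Fix $i$ and $n$, and let $s = \varphi(i)$ and $w = w^{(n)}$. We must find $x \in A^{(n)}$ and $y \in B^{(n)}$ that agree on whether $s$ belongs to the tree. The strategy is to pick a single-branch tree from $A^{(n)}$ and match it with a well-founded tree from $B^{(n)}$ whose part inside $[w]$ we control.

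\begin{itemize}
\item If $s$ is a prefix of $w$ (including $s = w$), choose any $p \in X_{A^{(n)}} \cap [w]$, which is nonempty. Then $s$ lies in the single-branch tree of $p$. Take $T = \{\emptyset\}$; largeness gives $S \in B^{(n)}$ with $S \cap [w] = \{w\}$. Since $S$ is prefix-closed and contains $w$, it contains $s$.
\item If $s$ extends $w$ properly, choose $p \in X_{A^{(n)}} \cap [s]$, which is nonempty because $X_{A^{(n)}}$ is everywhere nonmeager in $[w]$ and $[s] \subseteq [w]$. Write $s = wt$ and take $T$ to be the finite, hence well-founded, set of prefixes of $t$. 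Largeness gives $S \in B^{(n)}$ with $S \cap [w] = wT$, and both trees contain $s$.
\item If $s$ is incomparable with $w$, pick any $p \in X_{A^{(n)}} \cap [w]$. Its single-branch tree consists only of prefixes of $p$, so it omits $s$. For $S$ we cannot directly control membership of $s$. However, any single $S \in B^{(n)}$ (nonempty by largeness) decides whether $s \in S$. If $s \notin S$, the pair agrees. If $s \in S$, we would instead need an $x \in A^{(n)}$ containing $s$, which our single-branch trees inside $[w]$ cannot provide.
\end{itemize}

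The third case is the main obstacle. To handle it, I would strengthen the invariant so that largeness of $B^{(n)}$ also constrains $S$ outside $[w]$. The cleanest option is to require $S = wT$ exactly, or $S$ equal to the prefix-closure of $wT$; call this ``large at $w$ with nothing outside.'' Lemma~\ref{wf-largeness} should adapt verbatim to this stronger notion, since the argument only uses $S \cap [w]$ being forced. With the stronger invariant, in the third case $s \notin S$ unless $s$ is a prefix of $w$, which is already the first case. So the pair $(p, S)$ agrees at $i$, Player I never wins, Player II wins, and Theorem~\ref{bcg} shows the ill-founded trees are not Borel.
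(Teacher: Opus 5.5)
Your invariant and inductive step are exactly the paper's, and you correctly located the obstacle in your third case, but the proposed repair fails. The obstacle is real: in $\BCG_{A,B}$ Player I can force a win against your strategy. Let Player I split $B$ into $B_0=\{S\in B:\langle 5\rangle\in S\}$ and $B_5=\{S\in B:\langle 5\rangle\notin S\}$, with all other pieces empty. Only $B_0$ is large at $\langle k\rangle$ for its own index $k$, so your rule moves to $w^{(1)}=\langle 0\rangle$. If Player I then splits $A$ by the same bit, your invariant forces you onto the ill-founded trees omitting $\langle 5\rangle$, and Player I wins with $i=\varphi^{-1}(\langle 5\rangle)$.

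Your strengthened notion does not rescue this. It requires that for every well-founded $T\ni\emptyset$ the tree $\mathrm{pre}(w)\cup wT$ lies in $B$, where $\mathrm{pre}(w)$ is the set of prefixes of $w$; the literal version $S=wT$ is not even a tree when $w\neq\emptyset$. This notion is not preserved under countable partitions, so Lemma~\ref{wf-largeness} does not adapt. Contrary to your remark, that lemma works precisely because largeness constrains only $S\cap[w]$. The tree realizing $T=\{\emptyset\}\cup\bigcup_k kT_k$ also contains the pieces $wk'T_{k'}$ for $k'\neq k$, which lie outside $[wk]$, so it is never of the form $\mathrm{pre}(wk)\cup wkT_k$. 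The partition above already refutes the adapted lemma. Strong largeness of $B_0$ at $\langle 0\rangle$ needs the trees $\{\emptyset\}\cup 0T$, which omit $\langle 5\rangle$. Strong largeness of $B_5$ at $\langle 5\rangle$ needs trees containing $\langle 5\rangle$. Letting Player II move to an arbitrary extension does not help either. Assign countably many pairwise distinct trees $\mathrm{pre}(u)\cup uT_{u,k}$ to pieces other than $B_k$; then no piece is strongly large at any $u$.

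The missing idea is to play the monotone game instead. If $A$ were Borel, its upward closure in $2^\NN$ would be a monotone Borel set separating $A$ from $B$. Indeed, it is the preimage of $A$ under the continuous map sending a set to the largest tree it contains. So by Theorem~\ref{monotone-bcg}, whose forward direction rests on Theorem~\ref{monotone-borel}, it suffices for Player II to win $\mBCG_{A,B}$. There Player I needs $x_i=1$ for all $x\in A^{(n)}$ and $y_i=0$ for all $y\in B^{(n)}$ simultaneously. So for each $s=\varphi(i)$ you only need either one $x\in A^{(n)}$ omitting $s$ or one $y\in B^{(n)}$ containing $s$. Your first two cases already supply such a $y$. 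In the third case, the single-branch tree of any $p\in X_{A^{(n)}}\cap[w]$ already omits $s$, so nothing about $B^{(n)}$ outside $[w]$ is needed. With this one change your argument is the paper's proof.

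One minor correction: $A$ and $B$ are not complementary in $2^\NN$, since codes of non-prefix-closed sets lie in neither. This is harmless, because a Borel $A$ would itself separate $A$ from $B$.
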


\begin{proof}

Let $A$ denote the set of ill-founded trees and $B$ denote the set of well-founded trees. Because $A$ is monotone, it suffices by Theorem~\ref{monotone-bcg} to exhibit a winning strategy in $\mBCG_{A,B}$ for Player II. We claim that Player II can win by always maintaining the invariant that, for all $n \in \NN$, there is a string $w^{(n)} \in \NN^{<\NN}$ such that $X_{A^{(n)}}$ is everywhere nonmeager in $[w^{(n)}]$ and $B^{(n)}$ is large at $w^{(n)}$.

The invariant holds initially since $X_A = \NN^\NN$ is everywhere nonmeager and $B$ is large at the empty string. To see that Player II can maintain the invariant, suppose $X_{A^{(n)}}$ is everywhere nonmeager in $[w^{(n)}]$ and $B^{(n)}$ is large at $w^{(n)}$ for some string $w^{(n)} \in \NN^{<\NN}$. If $A^{(n)} = \bigcup_{j \in \NN} A^{(n)}_j$, then Lemma~\ref{everywhere-nonmeager} tells us that some $X_{A^{(n)}_j}$ is everywhere nonmeager in $[w^{(n)}z]$ for some $z \in \NN^{<\NN}$. Thus Player II can choose $j$, and the invariant holds for $w^{(n+1)} = w^{(n)}z$. If, on the other hand, Player I partitions $B^{(n)} = \bigcup_{k \in \NN} B^{(n)}_k$, then Lemma~\ref{wf-largeness} tells us that some $B^{(n)}_k$ is large at $w^{(n)}k$. Thus Player II can choose $k$, and the invariant holds for $w^{(n+1)} = w^{(n)}k$.

To show that this strategy is winning for Player II, it suffices to prove that for any $n \in \NN$ and string $v \in \NN^{<\NN}$, we have either $v \notin S$ for some $S \in A^{(n)}$ or $v \in T$ for some $T \in B^{(n)}$. Indeed, if $v$ and $w^{(n)}$ are compatible, then the largeness of $B^{(n)}$ at $w^{(n)}$ gives us a tree $T \in B^{(n)}$ with $v \in T$. If $v$ and $w^{(n)}$ are incompatible, then we can choose $x \in X_{A^{(n)}}$ with $v$ and $x$ incompatible. The tree $S$ consisting precisely of the finite prefixes of $x$ satisfies $S \in A^{(n)}$ and $v \notin S$, as desired.
\end{proof}

\begin{remark}

There is a sense in which this argument is stronger than the usual proof by universal sets and diagonalization that ill-founded trees are not Borel. The standard proof tells us that Player II has some winning strategy in $\mBCG_{A,B}$, while our proof gives us a special kind of winning strategy. In particular, a generic winning strategy chooses each $j$ as a function of a partition $A^{(n)}_0, A^{(n)}_1, \dots$ of $A^{(n)}$ and set $B^{(n)}$. Similarly, a generic winning strategy chooses each $k$ as a function of a partition $B^{(n)}_0, B^{(n)}_1, \dots$ and set $A^{(n)}$. Our winning strategy, on the other hand, can be viewed as two distinct components: one which chooses $j$ based only on the partition of $A^{(n)}$ and one which chooses $k$ based only on the partition of $B^{(n)}$. These two components communicate with each other by reading and setting the string $w^{(n)} \in \NN^{<\NN}$. If we tried to frame a generic winning strategy in this manner, the two components would need to use an infinite object to communicate, rather than just a finite string.

\end{remark}

\section{Implications for finite circuits}
\label{sec:finite-circuits}

As the set of ill-founded trees is analytic (one can nondeterministically guess the branch), we have proven the following using KW games (in Theorem~\ref{analytic-separation} and Theorem~\ref{ill-founded-trees}):
\begin{enumerate}
    \item A set is Borel if and only if it is simultaneously analytic and co-analytic.
    \item The classes of Borel sets, analytic sets, and co-analytic sets are all distinct.
\end{enumerate}

These facts have corresponding statements for finite circuits. The first statement corresponds to the claim $\Pclass/\poly = \NP/\poly \cap \coNP/\poly$, and the second corresponds to the proposition that $\Pclass/\poly \neq \NP/\poly \neq \coNP/\poly$ (which implies $\Pclass \neq \NP \neq \coNP$). Adapting the arguments in this paper to the finite case provides a potential route to resolving the $\Pclass$ vs.\ $\NP$ and $\NP$ vs.\ $\coNP$ problems. However, there are many differences between the finite and infinite settings which suggest that such a task will be difficult:
\begin{itemize}
    \item The claim $\Pclass/\poly = \NP/\poly \cap \coNP/\poly$ is generally believed to be false (this would imply, for example, that the factoring problem has polynomial-sized circuits), while the infinite version of this claim is true.
    \item Monotone Borel functions have monotone circuits (Theorem~\ref{monotone-borel}), while monotone functions in $\Pclass$ do not all have polynomial-sized monotone circuit families \cite{tardos1988}.
    \item $D(f)$ and $\log_2(\chi(f))$ are quadratically related in finite communication complexity, while all infinite KW games (even those without protocols) have countable partitions (Theorem~\ref{kw-partition}).
\end{itemize}

It seems that well-foundedness, although related to the size of finite circuits, is not a perfect analog.

\section{Acknowledgments}

I would like to thank Anton Bernshteyn for providing invaluable guidance, insightful observations, and many thoughtful questions leading to new discoveries. He found clever ways to simplify the proofs of Theorem~\ref{analytic-separation} and Theorem~\ref{monotone-borel}. I would also like to thank Alexander Sherstov for an excellent introduction to communication complexity.

\printbibliography

\end{document}